\date{}
\newtheorem{theorem}{\bf Theorem}
\newtheorem{claim}[theorem]{\bf Claim}
\newtheorem{lemma}[theorem]{\bf Lemma}
\newtheorem{conjecture}[theorem]{\bf Conjecture}
\newtheorem{observation}[theorem]{\bf Observation}
\newtheorem{definition}[theorem]{\bf Definition}
\begin{document}

\begin{frontmatter}
\journal{a journal}
\title{Linearly many rainbow trees in properly edge-coloured complete graphs}
\author{Alexey Pokrovskiy\fnref{label5}
}
\ead{dr.alexey.pokrovskiy@gmail.com}
\address{Department of Mathematics, ETH, 8092 Zurich, Switzerland.}
\fntext[label5]{Research supported in part by SNSF grant 200021-175573.}

\author{
Benny Sudakov\fnref{label5}
}
\ead{ benjamin.sudakov@math.ethz.ch}

\address{Department of Mathematics, ETH, 8092 Zurich, Switzerland.}

\begin{abstract}
A subgraph of an edge-coloured complete graph is called rainbow if all its edges have different colours. 
The study of rainbow decompositions has a long history, going back to the work of Euler on Latin squares.
In this paper we discuss three problems about decomposing complete graphs into rainbow trees: 
the Brualdi-Hollingsworth Conjecture, Constantine's Conjecture, and the Kaneko-Kano-Suzuki Conjecture.
We show that in every proper edge-colouring of $K_n$ there are $10^{-6}n$ 
edge-disjoint spanning isomorphic rainbow trees.  This simultaneously improves the best known bounds on all these conjectures. Using our method we also show that 
every properly $(n-1)$-edge-coloured $K_n$  has $n/9-6$ edge-disjoint rainbow trees, giving further improvement on the Brualdi-Hollingsworth Conjecture.
\end{abstract}

\begin{keyword}
Rainbow trees, proper edge-colourings, graph decompositions.
\end{keyword}
\end{frontmatter}

\section{Introduction}
In this paper we consider the following question: Can the edges of every properly edge-coloured complete graph be decomposed into edge-disjoint rainbow spanning trees. Here a properly edge-coloured complete graph $K_n$ means an assignment of colours to the edges of $K_n$ so that no two edges at a vertex receive the same colour. A rainbow spanning tree in $K_n$ is a tree containing every vertes of $K_n$, all of whose edges have different colours.

The study of rainbow decompositions dates back to the 18th century when Euler studied the question ``for which $n$ does there exist a properly $n$-edge-coloured $K_{n,n}$ which can be decomposed into $n$ edge-disjoint rainbow perfect matchings \footnote{Euler studied the values of $n$ for which a pair of $n\times n$ orthogonal Latin squares exists. Using a standard argument, it is easy to show that $n\times n$ orthogonal Latin squares are equivalent objects to rainbow perfect matching decompositions of $K_{n,n}$.}.''
Euler constructed such proper $n$-edge-colourings of $K_{n,n}$ whenever $n\not\equiv 2 \pmod{4}$, and conjectured that these are the only values of $n$ for which they can exist. The $n=6$ case of this conjecture is Euler's famous ``36 officers problem'', which was eventually proved by Tarry in $1901$. For larger $n$, Euler's Conjecture was disproved in 1959 by  Parker, Bose, and Shrikhande. 
Together these results give a complete description of the values of $n$ for which there exists a properly $n$-edge-coloured $K_{n,n}$ which can be decomposed into $n$ edge-disjoint rainbow perfect matchings.

Decompositions of properly $(2n-1)$-edge-coloured $K_{2n}$ into edge-disjoint rainbow perfect matchings have also been studied. They were introduced by Room in 1955 \footnote{Room actually introduced objects which are now called ``Room squares''. It is easy to show that Room squares are equivalent objects to decompositions of $(2n-1)$-edge-coloured $K_{2n}$ into edge-disjoint rainbow perfect matchings.}, who raised the question of which $n$ they exist for. Wallis showed that such decompositions of $K_{2n}$ exist if, and only if, $n\neq 2$ or $4$.
Rainbow perfect matching decompositions of both $K_{n,n}$ and $K_{2n}$ have  found applications in scheduling tournaments and constructing experimental designs (see eg \cite{Designs_Survey}). 

Euler and Room wanted to determine the values of $n$ for which there exist  colourings of $K_{n,n}$ or $K_{n}$ with rainbow matching decompositions. However given an \emph{arbitrary} proper edge-colouring of $K_{n,n}$ or $K_{n}$ it is not the case that it must have a decomposition into rainbow perfect matchings. A natural way of getting around this is to consider decompositions into rainbow graphs other than perfect matchings. In the past decompositions into rainbow subgraphs such as cycles and triangle factors have been considered~\cite{Colbourn_Mendelsohn}.

An additional reason to study rainbow subgraphs arises in Ramsey theory, more precisely in the
canonical version of Ramsey's theorem, proved by Erd\H{o}s and Rado \cite{ER-canonical} in 1950 . Here
the goal is to show that edge-colourings of $K_n$, in which each colour
appears only few times contain rainbow copies of certain graphs (see,
e.g., introduction of \cite{Sudakov_Volec}, for more details). 

In this paper we consider decompositions into rainbow trees. In contrast to the perfect matching case, it is believed that every properly edge coloured $K_n$ can be decomposed into edge-disjoint rainbow trees. 
 This was conjectured by three different sets of authors.
\begin{conjecture}[Brualdi and Hollingsworth, \cite{Brualdi_Hollingsworth}]\label{Conjecture_Brualdi_Hollongsworth}
Every properly $(2n-1)$-edge-coloured $K_{2n}$ can be decomposed into edge-disjoint rainbow spanning trees.
\end{conjecture}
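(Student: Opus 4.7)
The plan is to use a probabilistic approach combined with an absorption argument. First, observe that a decomposition of $K_{2n}$ into edge-disjoint rainbow spanning trees must contain exactly $n$ trees (since $\binom{2n}{2}/(2n-1)=n$), and since each tree has $2n-1$ edges and is rainbow, every tree uses each of the $2n-1$ colours exactly once. So the problem is equivalent to assigning, for each colour class $M_c$ (a perfect matching on $2n$ vertices with $n$ edges), a bijection between its $n$ edges and the $n$ trees $T_1, \dots, T_n$, such that each resulting $T_i$ is connected (and hence a spanning tree).

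My first step would be to build an \emph{almost}-decomposition by a random greedy procedure. Reserve at the outset a small random ``absorbing'' structure of edges from each colour; on the remaining edges, process the colour classes $M_1, \dots, M_{2n-1}$ one at a time, each time choosing a uniformly random bijection between $M_c$ and the trees. Using Azuma- or Chernoff-type concentration applied to the exploration process that tracks components, I would show that with high probability every $T_i$ is a rainbow forest with only $o(n)$ components and no extreme imbalances between component sizes. The properness of the colouring is crucial here because it limits how concentrated any colour can be around a single vertex, keeping the degree sequence of each $T_i$ well controlled.

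The second step is the absorption: use the reserved edges to merge the components of each $T_i$ into a single spanning tree. The natural absorbers are short rainbow ``switching gadgets'' --- configurations in which one can swap an edge of colour $c$ currently in $T_i$ for a reserved edge of the same colour in another tree, thereby reducing the number of components of $T_i$ without creating any colour repetition at a vertex. Because the colouring is proper, every vertex sees exactly one edge of each colour, which supplies the local flexibility needed to guarantee that many such switches exist and can be performed in parallel across the $n$ trees.

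The main obstacle is precisely this final absorption step. A random construction easily yields linearly many near-spanning rainbow trees (which is essentially what the paper achieves), but converting an almost-decomposition into an exact one requires simultaneously fixing $o(n)$ defects across \emph{all} $n$ trees while respecting the bijective structure at every colour and the properness at every vertex. This is where straightforward absorption breaks down: the constraints at different vertices and colours couple tightly, and a single switch can cascade unwanted changes through several trees at once. Overcoming this would likely require an iterated or ``distributive'' absorption scheme in the spirit of the techniques developed for existence of combinatorial designs, and is the reason why the full Brualdi--Hollingsworth conjecture has so far resisted proof.
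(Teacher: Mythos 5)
The statement you were asked to address is a \emph{conjecture}, not a theorem --- the paper does not prove it, and it was open at the time of writing. To your credit, you recognize this: your ``proposal'' is a candid sketch of a plausible line of attack rather than a claimed proof, and you correctly identify the absorption step as where the argument breaks down. The coupling of constraints across all $n$ trees, all $2n-1$ colour classes, and all $2n$ vertices simultaneously is exactly the difficulty; a naive switch to repair one tree can indeed cascade through several others. Your reduction of the problem to assigning, for each colour class $M_c$, a bijection from its $n$ edges to the $n$ trees is also a correct and standard reformulation of the conjecture.

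It is worth contrasting your probabilistic-plus-absorption outline with what the paper actually does \emph{toward} this conjecture. Theorem~2 of the paper ($n/9 - 6$ edge-disjoint spanning rainbow trees under the hypotheses of the Brualdi--Hollingsworth Conjecture) is entirely constructive and deterministic. It fixes a small set $A$ of roughly $n/9$ vertices, uses a Steiner triple system on $A$ to produce edge-disjoint rainbow spiders that each cover all of $A$ and most of the remaining vertices (Lemma~\ref{Lemma1FactorizationCoverSet}), and then one by one augments each spider to a spanning tree by local rotations that absorb one missing vertex at a time (Lemma~\ref{Lemma1FactorizationMakeSpanningTree}). The ``absorption'' there is local and sequential rather than global, which is precisely why it yields a constant fraction of the trees but stops well short of a full decomposition. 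So the paper and your sketch are genuinely different routes, and neither closes the conjecture: the gap you name --- that a global absorption scheme must untangle tightly coupled constraints at every vertex and colour --- is real, is not resolved by anything in the paper, and is the reason the conjecture was open. The paper's concluding remarks acknowledge this explicitly and pose the asymptotic version as the main open problem, citing only further partial progress (Balogh--Liu--Montgomery, $\epsilon n$ trees).
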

\begin{conjecture}[Kaneko, Kano, and Suzuki, \cite{Kaneko_Kano_Suzuki}]\label{Conjecture_Kaneko_Kano_Suzuki}
Every properly edge-coloured $K_{n}$ contains $\lfloor n/2\rfloor$ edge-disjoint isomorphic rainbow spanning trees.
\end{conjecture}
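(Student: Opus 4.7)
The plan is to combine an absorption argument with a carefully designed iterative construction that targets the sharp bound $\lfloor n/2\rfloor$. I first fix the target tree shape $T$: a natural candidate is a \emph{double broom} on $n$ vertices (two vertex-disjoint stars joined by a short path), which has exactly $n-1$ edges and is flexible enough to allow local rerouting while keeping vertex degrees under tight control. Since $\lfloor n/2\rfloor$ edge-disjoint spanning trees use $\lfloor n/2\rfloor(n-1)$ edges, which for even $n$ is essentially all of $\binom{n}{2}$, the extremal case is a proper $(n-1)$-edge-colouring of $K_n$, i.e.\ a $1$-factorisation. I would reduce to this case first, since colourings with more colours give only additional slack in the edge and colour budgets.

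Next I construct an \emph{absorber} $A \subseteq K_n$: a small edge-coloured subgraph that is itself a union of a few edge-disjoint rainbow copies of $T$, having the following correcting property. For every ``almost-tree'' leftover $R$ of the correct edge- and colour-profile, there is a bijective swap between a matched portion of $A$ and $R$ that produces another collection of edge-disjoint rainbow copies of $T$, one of which now absorbs $R$. The construction of $A$ would combine random sampling with explicit switches inside the $1$-factorisation: because each colour class is a perfect matching, swapping two edges of the same colour preserves the rainbow property, and composing such switches carefully allows one to reroute any tree of $A$ onto $R$. After reserving $A$, I would apply a nibble/Pippenger-style greedy argument, in the spirit of the paper's $10^{-6}n$ theorem, to find $\lfloor n/2\rfloor - o(n)$ edge-disjoint rainbow copies of $T$ in $K_n\setminus E(A)$, and then close the $o(n)$ gap by running the residue through the absorber.

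The crucial obstacle is closing the very last few trees, where the tightness of $\lfloor n/2\rfloor$ leaves no room for loss. Unlike the $cn$ regime, after $\lfloor n/2\rfloor - 1$ trees are chosen the remaining $n-1$ edges need not even be connected, let alone form a copy of $T$, and there is no obvious parity or divisibility invariant preventing the construction from getting stuck. I would address this by designing $A$ so that the uncovered edges are forced into a highly structured rainbow substructure — for instance a rainbow spanning cycle, or a union of a few rainbow Hamilton paths — inside which an Andersen-type exchange lemma on proper edge-colourings guarantees that a copy of $T$ can always be completed. I expect this final-tree step, rather than the bulk nibble phase, to be the true source of difficulty in Conjecture~\ref{Conjecture_Kaneko_Kano_Suzuki}.
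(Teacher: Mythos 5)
This statement is one of the three open conjectures the paper is \emph{about}, not a result it proves: the paper's own contribution is the much weaker bound of $10^{-6}n$ edge-disjoint isomorphic rainbow spanning trees (Theorem~\ref{TheoremProperColouring}), and it explicitly identifies even an asymptotic version of the conjecture as the big open problem. So there is no proof in the paper to compare yours against, and your proposal would need to stand entirely on its own. It does not: it is a strategy outline in which every load-bearing step is asserted rather than established.

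Concretely, three gaps. First, your bulk phase already assumes the main open problem. You propose to get $\lfloor n/2\rfloor - o(n)$ trees ``in the spirit of the paper's $10^{-6}n$ theorem,'' but the paper's method (root-covering spiders, then vertex-by-vertex extension) inherently loses a constant fraction at each stage; nothing in it scales to $(1-o(1))$ of the trivial upper bound, and no argument known at the time achieves this. Second, the absorber's ``correcting property'' is exactly the hard part and is never constructed. Your claim that ``swapping two edges of the same colour preserves the rainbow property'' conflates two different things: such a swap keeps each tree's colour multiset rainbow, but it does not keep the trees being trees (acyclicity and spanning connectivity are destroyed by arbitrary edge exchanges), and it does not preserve the isomorphism type $T$ that the conjecture demands. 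Designing switches that simultaneously preserve rainbowness, edge-disjointness, tree structure, and shape across $\lfloor n/2\rfloor$ objects is precisely where all known approaches fail, and you acknowledge as much in your final paragraph. Third, the opening reduction to $1$-factorisations is not justified: a proper colouring with more than $n-1$ colours is not a special case of a $1$-factorisation, colour classes cannot be merged without violating properness, and the conjecture as stated concerns arbitrary proper colourings (including odd $n$, where the edge count is not tight and the extremal structure is different). As written, the proposal is a research plan for an open problem, not a proof.
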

\begin{conjecture}[Constantine, \cite{Constantine}]\label{Conjecture_Constantine}
Every properly $(2n-1)$-edge-coloured $K_{2n}$ can be decomposed into edge-disjoint isomorphic rainbow spanning trees.
\end{conjecture}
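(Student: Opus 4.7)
The goal is to decompose $K_{2n}$ into $n$ edge-disjoint rainbow spanning trees all isomorphic to a fixed tree $T$ on $2n$ vertices. Since the abstract only claims $10^{-6}n$ such trees, a complete proof of Constantine's Conjecture would have to push well beyond a linear lower bound; the sketch below outlines the natural route rather than a finished argument.

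First, I would choose a flexible target shape, say a \emph{broom}: a short spine $v_0 v_1 \cdots v_k$ together with $2n-2-k$ leaves attached to $v_k$. Such a tree has a large leaf set, which is essential for the later flexibility. Next, partition the $2n-1$ colours into a \emph{spine palette} of size $k$ and a \emph{leaf palette} of size $2n-1-k$, and insist that in every embedded copy of $T$ the spine uses the spine palette and the leaves use the leaf palette. With this convention the leaf-embedding step reduces, for each individual tree, to choosing a rainbow system of distinct representatives from the leaf palette — a matching problem in an auxiliary bipartite graph whose parameters are controlled by the properness of the colouring.

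Second, I would set aside a small random ``absorber'' $A$ on $o(n)$ vertices before the main construction. The absorber should be chosen so that for any small, nearly-regular leftover of colour classes, the missing edges can be extended inside $A$ into complete rainbow copies of $T$; one aims to achieve this via a probabilistic deletion argument together with a switching gadget that pivots leaves between neighbouring stars. Having reserved $A$, iteratively peel off rainbow copies of $T$ from $K_{2n}\setminus A$ by a greedy spine embedding (properness bounds the forbidden colour set at each extension) and a Hall-type argument in the spirit of Andersen--Hilton for the leaf assignment. After $n - o(n)$ copies have been extracted, process the remaining edges together with $A$ via the absorber to produce the final batch of rainbow copies of $T$ and close the decomposition.

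The main obstacle, and presumably the reason the conjecture remains open, is the simultaneous enforcement of the rainbow, spanning, edge-disjoint, and \emph{isomorphism} conditions in the endgame. Dropping isomorphism corresponds to the Brualdi--Hollingsworth setting; dropping ``rainbow'' reduces to a standard Ringel-type tree decomposition. Prescribing a \emph{single} shape $T$ forces the absorber to be rigid in its shape requirements yet flexible in its colour palette, and one needs a gadget whose automorphism group is rich enough to permit the last few colour rearrangements without destroying the global tree structure. I expect any complete proof would rest on a novel absorber design tailored to a very symmetric $T$, together with sharper concentration in the random main step so that the absorber can be kept genuinely sublinear in $n$ while still handling every possible leftover configuration.
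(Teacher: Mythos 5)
This statement is one of the three open conjectures motivating the paper; the paper does \emph{not} prove it, and in fact only establishes (Theorem~\ref{TheoremProperColouring}) that $10^{-6}n$ edge-disjoint isomorphic rainbow spanning trees exist. Your proposal does not close that gap either --- you say so yourself --- so it cannot be accepted as a proof.

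Beyond being unfinished, there is a concrete structural obstruction to the tree shape you chose. In a full decomposition of $K_{2n}$ into $n$ spanning trees, every vertex $v$ has its $2n-1$ incident edges partitioned among the $n$ trees, each of which is spanning and hence gives $v$ degree at least $1$. If $v$ has degree $\Delta$ in one tree, then $\Delta + (n-1) \leq 2n-1$, so every tree in the decomposition has maximum degree at most $n$. Your broom has a head of degree $2n-1-k$, so one must take $k \geq n-1$: the spine has to occupy at least half the tree. A ``short spine with a large leaf set'' simply cannot appear in any decomposition, isomorphic or not. The same counting shows why the paper works with $t$-spiders of root degree roughly $2n$ only for the \emph{partial} result (linearly many trees), and why the authors explicitly leave the full conjecture open. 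Your colour-palette split runs into an analogous wall: forcing all $n$ spines into a $k$-colour palette would require decomposing those $k$ perfect matchings into $n$ rainbow paths --- a problem at least as hard as the one you started with. An absorption-based attack is plausible in spirit, but it would have to be built around a tree whose maximum degree is about $n$, and nothing in your sketch addresses how to make the absorber respect a single fixed isomorphism type under the rainbow constraint, which is exactly the difficulty the paper identifies as open.
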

There are many partial results on the above conjectures. It is easy to see that every properly coloured $K_n$ contains a single rainbow tree---specifically the star at any vertex will always be rainbow. Strengthening this, various authors have shown that more disjoint trees exist under assumptions of Conjectures~\ref{Conjecture_Brualdi_Hollongsworth}--\ref{Conjecture_Constantine}.

Brualdi and Hollingsworth~\cite{Brualdi_Hollingsworth} showed that every properly $(2n-1)$-coloured $K_{2n}$ has $2$ edge-disjoint rainbow spanning trees. 
Krussel, Marshall, and Verrall~\cite{Krussel_Marshall_Verrall} showed that there are $3$ rainbow spanning trees under the same assumption.
Kaneko, Kano, and Suzuki~\cite{Kaneko_Kano_Suzuki} showed that $3$ edge-disjoint rainbow spanning trees exist in any proper colouring of $K_n$ (with any number of colours).
Akbari and Alipour~\cite{Akbari_Alipour} showed that $2$ edge-disjoint rainbow spanning trees exist in any colouring of $K_n$ with at most $n/2$ edges of each colour.
Carraher, Hartke, and Horn~\cite{Carraher_Hartke_Horn} showed that under the same assumption, $\lfloor n/1000\log n\rfloor$ edge-disjoint rainbow spanning trees exist. In particular this implies that every properly coloured $K_n$ has this many edge-disjoint spanning rainbow trees.
Horn~\cite{Horn} showed that there is an $\epsilon>0$ such that every  $(2n-1)$-coloured $K_{2n}$ has $\epsilon n$ edge-disjoint rainbow spanning trees. 
Subsequently, Fu, Lo, Perry, and Rodger~\cite{Fu_Lo_Perry_Rodger} showed that every $(2n-1)$-coloured $K_{2n}$ has $\lfloor\sqrt{6n+9}/3\rfloor$ edge-disjoint rainbow spanning trees. 
For Conjecture~\ref{Conjecture_Constantine}, Fu and Lo~\cite{Fu_Lo} showed that  every $(2n-1)$-coloured $K_{2n}$ has $3$ isomorphic edge-disjoint spannind trees. In addition to these results, there has been a fair ammount of work showing that edge-coloured complete graphs with certain specific colourings can be decomposed into rainbow spanning trees (see eg \cite{Akbari_Alipour_Fu_Lo, Constantine}). 

Here is a summary of the the best known results for these problems for large $n$. Horn proved for the Brualdi-Hollingsworth Conjecture that $\epsilon n$ edge-disjoint rainbow spanning trees exist.
For the Kaneko-Kano-Suzuki Conjecture,  Carraher, Hartke, and Horn proved that $\lfloor n/1000\log n\rfloor$ edge-disjoint rainbow spanning trees exist.
For Constantine's Conjecture, Fu and Lo proved that $3$ edge-disjoint rainbow spanning trees exist.

Here we substantially improve the best known bounds for all three conjectures. Define a $t$-spider to be a tree obtained from a star by subdividing $t$ of its edges once. We prove the following.
\begin{theorem}\label{TheoremProperColouring}
Every properly edge-coloured $K_n$  contains $10^{-6} n$ edge-disjoint rainbow spanning $t$-spiders for any $0.0007n\leq t\leq 0.2n$.
\end{theorem}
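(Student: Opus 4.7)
The plan is to construct $k := \lfloor 10^{-6} n\rfloor$ edge-disjoint rainbow spanning $t$-spiders $T_1,\dots,T_k$ iteratively, each centred at one of $k$ pre-chosen vertices $c_1,\dots,c_k$; write $C:=\{c_1,\dots,c_k\}$. A spanning $t$-spider with centre $c_i$ is determined by a set $E_i\subseteq V\setminus\{c_i\}$ of $t$ \emph{end leaves} and a matching $N_i$ of size $t$ from $E_i$ into $V\setminus(E_i\cup\{c_i\})$: the spider is $T_i=S_i\cup N_i$, where $S_i:=\{c_iv:v\notin E_i\cup\{c_i\}\}$ is the \emph{star part}. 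Since the colouring is proper at $c_i$, $T_i$ is rainbow iff $N_i$ is rainbow with palette exactly $\Gamma_i:=\{\chi(c_ie):e\in E_i\}$, the $t$ colours at $c_i$ that $S_i$ does not use.

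To sidestep star--star conflicts I would insist on the convention $C\setminus\{c_i\}\subseteq E_i$ for every $i$, which is easy to arrange because $|C\setminus\{c_i\}|=k-1\ll t$. The only way $S_i$ and $S_j$ could clash is via the edge $c_ic_j$, and the convention removes $c_ic_j$ from both stars. It remains to build the matchings $N_1,\dots,N_k$ so that they are pairwise edge-disjoint and disjoint from every $S_j$. Building the $T_i$ greedily, at stage $i$ the forbidden edge set $F$ of previously used edges has size $|F|\le(k-1)(n-1)=O(10^{-6}n^2)$, which is a negligible fraction of $E(K_n)$. The second constraint — disjointness of $N_i$ from $S_j$ — reduces to a colour condition: for every centre $c_j\in E_i$ (with $j\ne i$), the edge of $N_i$ incident to $c_j$ must have a colour in $\Gamma_j$, otherwise it would be a star edge of $T_j$.

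The heart of the argument is therefore the following rainbow-matching lemma: in a properly edge-coloured $K_n$, given a vertex $c$, a small required end-leaf set $B\subseteq V\setminus\{c\}$, a small forbidden edge set $F$, and an assigned palette $\Gamma_j$ at each $c_j\in B$, one can extend $B$ to an end-leaf set $E$ of size $t$ and find a rainbow matching $N$ from $E$ to $V\setminus(E\cup\{c\})$ whose palette equals $\Gamma:=\{\chi(ce):e\in E\}$, which avoids $F$, and which respects the palette constraint at every $c_j\in B$. I would prove this by the probabilistic method: pick the remaining $t-|B|$ end leaves uniformly at random from $V\setminus C$, and then for each colour $\gamma\in\Gamma$ pick a uniformly random edge of colour $\gamma$ between $E$ and $V\setminus(E\cup\{c\})$ — colour classes of a proper edge-colouring are matchings of $K_n$, and a direct count shows each class contributes linearly many candidate edges because $t$ and $n-2t$ are both linear in $n$. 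Concentration (Talagrand's inequality for the matching property; union bound or Lovász Local Lemma for the $O(k)$ boundary colour conditions) should then give a positive probability of success.

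The principal obstacle is this matching lemma, and in particular the simultaneous handling of the rainbow constraint, the prescribed-palette constraint, the $|F|\sim 10^{-6}n^2$ avoidance, and the $k-1$ boundary colour conditions at the centres of $B$. The lower bound $t\ge 0.0007n$ is what presumably supplies the slack these probabilistic estimates need: each ``bad event'' (two matching edges sharing a vertex, using an edge of $F$, or violating a boundary condition) occurs with probability $O(1/n)$, and the required independence / Local Lemma degree conditions hold only once $t$ is a large enough linear fraction of $n$. The upper bound $t\le 0.2n$ ensures that the matching target $V\setminus(E\cup\{c\})$ stays substantially larger than $E$, keeping each colour class rich in useable candidate edges. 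Finally, since $k$ is tiny compared to $t$, the greedy iteration never exhausts the flexibility provided by the lemma, and the construction proceeds for all $k$ stages.
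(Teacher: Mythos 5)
Your decomposition of a spanning $t$-spider rooted at $c$ into a star part $S = \{cv : v \notin E \cup \{c\}\}$ and a matching $N$ of size $t$ from the end-leaf set $E$ into $V\setminus(E\cup\{c\})$ is correct, and the greedy outer loop (build the spiders one at a time, collecting used edges into a forbidden set $F$) is a reasonable shape for an argument. The gap is concentrated entirely in the matching lemma, and it is fatal as written. First, the reduction ``$T_i$ is rainbow iff $N_i$ is rainbow with palette \emph{exactly} $\Gamma_i$'' is wrong: the correct statement is that the palette of $N_i$ need only be \emph{disjoint from} that of $S_i$, and colours that do not appear at $c_i$ at all are also legal. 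Forcing the palette to equal $\Gamma_i$ is an extra constraint, and it is precisely the constraint that kills the count. Second, the assertion that ``each class contributes linearly many candidate edges'' assumes, without justification, that the colour classes $\gamma \in \Gamma_i$ are large. A proper edge-colouring of $K_n$ can assign every edge its own colour; then each $\gamma = \chi(c_ie) \in \Gamma_i$ appears on the single edge $c_ie$, which is \emph{not} between $E$ and $V\setminus(E\cup\{c_i\})$, so that colour class contributes zero candidate edges rather than $\Theta(n)$. Talagrand, the local lemma, and the lower bound $t \ge 0.0007n$ cannot manufacture candidates where there are none.

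These two points are really one: when the colours in $\Gamma_i$ are scarce you would need to substitute colours absent from $c_i$, but whether such colours exist, and whether they are abundant enough to carry a rainbow matching of size $t$ avoiding $F$, depends delicately on the global colour distribution of $K_n$. That dependence is the hard content of the paper. The paper's proof splits on the size of $C_F$ (the set of colours with at least $\epsilon n$ edges), reserves helper rainbow matchings in rare colours via Lemma~\ref{LemmaManyLargeMatchings} when almost all colours are frequent, and, crucially, does not try to pick $E$ up front: Lemma~\ref{LemmaAddOneVertexToStar} absorbs one new vertex at a time by a structured edge exchange that makes room for the spider regardless of which colours happen to be usable. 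Your one-shot probabilistic sampling of $E$ and then $N$ bypasses exactly this flexibility, and there is no obvious way to recover it without essentially reproving the paper's lemmas.
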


Beyond improving the bounds on Conjectures~\ref{Conjecture_Brualdi_Hollongsworth}--\ref{Conjecture_Constantine}, Theorem~\ref{TheoremProperColouring} is qualitatively stronger than all of them. Firstly, the isomorphism class of the spanning trees in Theorem~\ref{TheoremProperColouring} is independent of the colouring on $K_n$ (whereas Constantine's Conjecture allows for such a dependency). Additionally Theorem~\ref{TheoremProperColouring} produces isomorphic spanning trees under a weaker assumption than Constantine's Conjecture (namely we do not specify that $K_n$ is $(n-1)$-coloured). 

The method we use to prove Theorem~\ref{TheoremProperColouring} is quite flexible. For any one of the three conjectures, it is easy to modify our method to give a further improvement on the $10^{-6} n$ bound from our theorem. 
In order to illustrate this, we will show that in the case of the Brualdi-Hollingsworth Conjecture one can cover over 20\% of the edges by spanning rainbow trees. 
\begin{theorem}\label{Theorem1Factorization}
Every properly $(n-1)$-edge-coloured  $K_n$ has $n/9-6$ edge-disjoint spanning rainbow trees. 
\end{theorem}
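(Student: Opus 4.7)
The plan is to adapt the strategy from Theorem~\ref{TheoremProperColouring} to the rigid setting of a $1$-factorisation. Since $K_n$ admits a proper $(n-1)$-edge-colouring, $n$ is even and the colour classes $M_1,\ldots,M_{n-1}$ are perfect matchings, so a rainbow spanning tree uses exactly one edge from each $M_c$. In particular the star $S_v$ at any vertex $v$ is already a rainbow spanning tree, so the whole issue is to turn stars at many different vertices into pairwise edge-disjoint trees.

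I would fix a set $A=\{a_1,\ldots,a_k\}$ of $k=\lfloor n/9\rfloor-6$ centres and initialise $T_j$ as the star $S_{a_j}$. Each is rainbow, but the $\binom{k}{2}$ edges inside $A$ are shared; for each such edge $a_ia_j$ we assign it to exactly one of the two stars, and the other star must drop it. The repair primitive is a double swap: to drop $a_ia_j$ of colour $c$ from $T_j$, pick an auxiliary vertex $y\notin\{a_i,a_j\}$, let $d=\mathrm{colour}(a_jy)$, and let $w,v'$ be the unique vertices with $a_iw\in M_d$ and $yv'\in M_c$. Removing $\{a_ia_j,a_jy\}$ from $T_j$ and adding $\{a_iw,yv'\}$ preserves rainbowness, since the removed and added colour multisets both equal $\{c,d\}$; a short check using proper-colouring constraints gives $w,v'\notin\{a_i,a_j,y\}$, so both $a_i$ and $y$ remain connected to $a_j$ through the surviving star edges $a_jw$ and $a_jv'$, and the result is still a spanning tree (a ``double spider'' at $a_j$ with one length-two leg replacing each dropped diagonal).

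The main obstacle is coordinating all $\binom{k}{2}$ swaps simultaneously so that the $k$ final trees are pairwise edge-disjoint. Two collision patterns need care. First, the newly added edge $a_iw_{ij}$ in $T_j$ may coincide with a star edge $a_iw$ still present in $T_i$; to avoid this I will orient the global assignment of diagonal edges so that in each $T_i$ a suitable collection of star-neighbours of $a_i$ is already freed up by the swaps occurring \emph{inside} $T_i$, which means each $w_{ij}$ can be steered into a legal slot. Second, the new edge $y_{ij}v'_{ij}$ may coincide with the far edge of another swap; but all far edges created by swaps of diagonals of colour $c$ lie in the single perfect matching $M_c$, so collisions of this type separate automatically by colour class and can be bounded matching-by-matching. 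Combining these two controls with a greedy selection of the auxiliary vertices $y_{ij}$, and checking that a simple union bound over the $\binom{k}{2}$ pairs leaves enough slack, should yield the claimed bound $k=n/9-6$. The hardest step will be the first collision pattern, because swaps involving low-index centres have the fewest star-neighbours freed up in advance; getting those swaps off the ground will likely require building the first few trees with a modified base structure rather than a pure star.
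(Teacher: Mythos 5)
Your local move is sound: removing $a_ia_j$ (colour $c$) and $a_jy$ (colour $d$) from the star at $a_j$ and adding back the colour-$d$ edge at $a_i$ and the colour-$c$ edge at $y$ preserves rainbowness and the spanning-tree property, and turns the star into a spider. But this is the easy part; the entire content of the theorem lives in the global coordination, and the proposal does not supply a scheme that works. The circular dependency you flag is genuinely vicious: for the added edge $a_iw_{ij}$ to be legal in $T_j$, the vertex $w_{ij}$ must have been freed up in $T_i$, i.e.\ $w_{ij}$ must coincide with one of $T_i$'s auxiliary vertices $y_{i\ell}$; but $w_{ij}$ is \emph{determined} by the colour of $a_jy_{ij}$, and $T_i$'s auxiliaries are themselves constrained by the swaps $T_i$ must perform to free up $T_\ell$'s additions, for every $\ell$. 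Saying one will ``orient the global assignment'' and use ``a greedy selection'' plus ``a simple union bound'' names the problem rather than solving it, and there is no reason to expect a straightforward greedy can thread these interlocking constraints.

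The paper breaks the circularity with a combinatorial device absent from your proposal. It takes $|A|\equiv 1,3\pmod 6$ and a Steiner triple system on $A$, so the diagonals of $A$ are partitioned into triples. For each cyclically oriented triple $(x,y,z)$, Claim~\ref{ClaimAssignVerticesToTriples} selects a single colour $c$ and three distinct vertices $b(x,y,z),b(y,z,x),b(z,x,y)\in B$ with $c(xb(x,y,z))=c(yb(y,z,x))=c(zb(z,x,y))=c$, with uniqueness safeguards. Then the diagonal $xy$ is kept in $D_x$, the far edge $zb(z,x,y)$ is given to $D_x$, and crucially the star edge $xb(x,y,z)$ is \emph{excluded} from $D_x$ (rather than re-attached via a second far edge) so that it can be handed to $D_y$ without collision. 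The cyclic structure within each triple makes the three assignments mutually consistent, and the uniqueness conditions in Claim~\ref{ClaimAssignVerticesToTriples}(iii) prevent cross-triple collisions. The price is that the intermediate spiders $D_1,\dots,D_m$ are \emph{not} spanning --- each misses $(m-1)/2$ vertices of $B$ --- so a second lemma (Lemma~\ref{Lemma1FactorizationMakeSpanningTree}) is needed, which re-absorbs one missing vertex at a time by a small swap. Your plan keeps every tree spanning throughout, which is why you must add the extra far edge $y_{ij}v'_{ij}$; but that extra edge only enlarges the set of objects whose collisions you must control, with no cancellation mechanism in sight. Either produce a concrete coordination scheme (which would be the bulk of the proof, and likely amounts to rediscovering the triple-system idea or an equivalent), or allow non-spanning intermediate trees and prove an absorption lemma in the spirit of Lemma~\ref{Lemma1FactorizationMakeSpanningTree}.
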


\subsection*{Notation}
Throughout the paper all colourings of graphs will be edge-colourings. 
For an edge $e$, we use $c(e)$ to denote the colour of $e$.
For a colour $c$ and a graph $G$, we will use ``$c\in G$'' to mean that $G$ has a colour $c$ edge.

For a graph $G$ and a set of vertices $U$ we use $G\setminus U$ to denote the induced subgraph of $G$ on $V(G)\setminus U$.
For a graph $G$ and a set of edges $E$ we use $G\setminus E$ to denote the subgraph  of $G$ formed by deleting the edges in $E$. Thus for a subgraph $H$ of $G$, ``$G\setminus V(H)$'' and ``$G\setminus E(H)$'' denote the subgraphs of $G$ formed by deleting the vertices and edges of $H$ respecively.

\begin{definition}
A graph $S$ is a $t$-spider if $V(S)=\{r, j_1, \dots, j_t, x_1, \dots, x_t, y_1, \dots, y_{|S|-2t-1}\}$ with $E(S)=\{rj_1, \dots, rj_t\}\cup\{ry_1, \dots, ry_{|S|-2t-1}\}\cup\{j_1x_1, \dots, j_tx_t\}$.
\end{definition}
The vertex $r$ is called the \emph{root} of the spider. 
The vertices $y_1, \dots, y_{|S|-2t-1}$ are called \emph{ordinary leaves}.
We will use ``$D$ is a $(\leq t)$-spider'' to mean that ``$D$ is a $s$-spider for some $s\leq t$.'' We will often use the following two simple observations  to build spiders.

\begin{observation}\label{ObservationStarPlusMatching}
Let $S$ be a star rooted at $r$ and $M$ be a matching with $|e\cap S|=1$ and $r\not\in e$ for all the edges $e\in M$. Then $M\cup S$ is an $|M|$-spider.
\end{observation}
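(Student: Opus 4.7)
The plan is to verify directly that $M \cup S$ fits the definition of a $t$-spider, where $t := |M|$, via an appropriate labeling of its vertices. I would begin by writing $S$ with root $r$ and leaves $\ell_1, \ldots, \ell_k$, so that $V(S) = \{r, \ell_1, \ldots, \ell_k\}$ and $E(S) = \{r\ell_i : 1 \le i \le k\}$. The hypotheses on $M$ say that each edge $e \in M$ meets $V(S)$ in exactly one vertex and that this vertex is not $r$; hence $e$ has one endpoint among $\{\ell_1, \ldots, \ell_k\}$ and one endpoint outside $V(S)$.

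Since $M$ is a matching, the $V(S)$-endpoints of distinct edges of $M$ are distinct, and the non-$V(S)$-endpoints likewise. So after reindexing the leaves I may list the edges of $M$ as $\ell_1 x_1, \ldots, \ell_t x_t$, where $x_1, \ldots, x_t$ are pairwise distinct vertices lying outside $V(S)$ (in particular none of them equals $r$). Setting $j_m := \ell_m$ for $1 \le m \le t$ and $y_i := \ell_{t+i}$ for $1 \le i \le k - t$, the vertex set of $M \cup S$ is precisely $\{r, j_1, \ldots, j_t, x_1, \ldots, x_t, y_1, \ldots, y_{k-t}\}$; writing $n' := |M \cup S| = k + 1 + t$, this gives $k - t = n' - 2t - 1$ ordinary leaves, matching the required count.

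Finally, the edge set of $M \cup S$ decomposes as $\{rj_1, \ldots, rj_t\}$ (star edges to the leaves that get extended by $M$), together with $\{ry_1, \ldots, ry_{n'-2t-1}\}$ (star edges to the leaves not touched by $M$), together with $\{j_1 x_1, \ldots, j_t x_t\}$ (the matching itself), which is verbatim the edge set required by the definition of a $t$-spider rooted at $r$. There is really no obstacle: the statement is pure bookkeeping, and the only step that deserves a line of justification is the distinctness of the vertices $r, j_1, \ldots, j_t, x_1, \ldots, x_t, y_1, \ldots, y_{k-t}$, which is immediate from $M$ being a matching together with the condition $|e \cap V(S)| = 1$ for each $e \in M$.
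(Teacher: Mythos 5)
Your proof is correct. The paper states this as an \textbf{Observation} without any proof, treating it as immediate from the definitions, so there is no paper argument to diverge from; your write-up simply makes the implicit bookkeeping explicit. The labeling $j_m := \ell_m$, $x_m$ the outside endpoint of the matching edge at $\ell_m$, and $y_i$ the untouched leaves matches the definition of a $t$-spider exactly, the vertex count $k - t = |M\cup S| - 2t - 1$ is right, and the distinctness of all listed vertices does follow as you say from $M$ being a matching, $|e \cap V(S)| = 1$, and $r \notin e$. No gap.
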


\begin{observation}\label{ObservationTwoDoubleStars}
Let $D_1$ be a $d_1$-spider rooted at $r$, and  $D_2$  a $d_2$-spider rooted at $r$ with $V(D_1)\cap V(D_2)=\{r\}$. Then $D_1\cup D_2$ is a $(d_1+d_2)$-spider.
\end{observation}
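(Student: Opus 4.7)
The plan is to prove Observation~\ref{ObservationTwoDoubleStars} by a direct verification against the definition of a spider. By hypothesis, there are labellings
\begin{align*}
V(D_1) &= \{r, j^1_1, \dots, j^1_{d_1}, x^1_1, \dots, x^1_{d_1}, y^1_1, \dots, y^1_{|D_1|-2d_1-1}\},\\
V(D_2) &= \{r, j^2_1, \dots, j^2_{d_2}, x^2_1, \dots, x^2_{d_2}, y^2_1, \dots, y^2_{|D_2|-2d_2-1}\},
\end{align*}
with $E(D_i)$ consisting of the edges from $r$ to the $j^i$-vertices, from $r$ to the $y^i$-vertices, and the matching edges $j^i_k x^i_k$. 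The hypothesis $V(D_1)\cap V(D_2)=\{r\}$ means these two labellings use disjoint names outside of $r$, so they can be concatenated with no clashes.

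Next I would produce a single labelling witnessing that $D_1\cup D_2$ is a $(d_1+d_2)$-spider rooted at $r$: list the subdivision vertices as $j^1_1,\dots,j^1_{d_1},j^2_1,\dots,j^2_{d_2}$, the matched outer endpoints as $x^1_1,\dots,x^1_{d_1},x^2_1,\dots,x^2_{d_2}$ in the same order, and pool the ordinary leaves of $D_1$ and $D_2$ into a single list. Under this labelling, the edge set prescribed by the spider definition at parameter $d_1+d_2$ is exactly $E(D_1)\cup E(D_2)=E(D_1\cup D_2)$.

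Finally I would do a cardinality check to close the bookkeeping: the number of ordinary leaves in the combined labelling is
\[
(|D_1|-2d_1-1)+(|D_2|-2d_2-1) = |D_1\cup D_2|-2(d_1+d_2)-1,
\]
which is exactly the count required by the definition of a $(d_1+d_2)$-spider on $|D_1\cup D_2|$ vertices. There is no real obstacle here; the only subtle point is that the disjointness assumption is needed precisely to ensure that, for instance, an ordinary leaf of $D_1$ is not accidentally identified with an $x$-vertex of $D_2$, which would corrupt the labelling. Since the definition is satisfied verbatim, the observation follows.
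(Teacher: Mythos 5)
Your proof is correct, and since the paper states this observation without proof, the direct verification against the definition that you give is exactly the argument the paper leaves implicit: concatenate the two labellings (disjointness outside $r$ makes this well-defined), observe that the union's edge set is precisely what the definition prescribes at parameter $d_1+d_2$, and close with the ordinary-leaf count $|D_1\cup D_2|-2(d_1+d_2)-1$.
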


\section{Proof sketch}\label{SectionProofSketch}
In this self-contained section we give a sketch of the proof of Theorem~\ref{TheoremProperColouring}. Throughout the section, we fix a properly coloured complete graph  $K_n$ and let $m=10^{-6} n$ be the number of edge-disjoint spiders we are trying to find. 

For the purposes of this proof sketch, it is convenient to introduce some notation.
We say that a family of spiders $\mathcal D= \{D_1, \dots, D_m\}$ is \emph{root-covering} if the root of $D_i$ is in $V(D_j)$ for any $i,j\in \{1, \dots, m\}$. 
The basic idea of the proof of Theorem~\ref{TheoremProperColouring} is to first find a root-covering family of non-spanning, non-isomorphic, spiders $\mathcal D= \{D_1, \dots, D_m\}$. Then, for each $i$, the spider $D_i$ is modified into a spanning, isomorphic rainbow spider. 
The reason for considering root-covering families is that the roots are the highest degree vertices in spiders. Because of this, they are intuitively the most difficult vertices to cover in the spiders we are looking for. Thus in the proof we first find a family of spiders which is root-covering, and then worry about making them spanning and isomorphic. 

The proof of Theorem~\ref{TheoremProperColouring} naturally splits into three steps:
\begin{enumerate}[(1)]
\item Find a {root-covering} family of large edge-disjoint rainbow spiders $D_1, \dots, D_{m}$ in $K_{n}$.
\item Modify the spiders from (1) into a root-covering family of \emph{spanning}, edge-disjoint, rainbow spiders $D'_1, \dots, D'_{m}$.
\item Modify the spiders from (2) into a root-covering family of spanning, edge-disjoint, rainbow, \emph{isomorphic} spiders $D''_1, \dots, D''_{m}$.
\end{enumerate}

Step (1) is the easiest part of the proof. To prove it, we first find a family of disjoint rainbow \emph{stars} $S_1, \dots, S_{m}$ rooted at $r_1, \dots, r_{m}$ in $K_n$. 
Then by exchanging some edges between these stars, we obtain spiders $D_1, \dots, D_{m}$ rooted at $r_1, \dots, r_{m}$ which is root-covering. See Lemma~\ref{LemmaCoverBipartiteGraph}.

Step (2) is the hardest part of the proof. It involves going through the spiders  $D_1, \dots, D_{m}$ from part (1) one by one and modifying them. For each $i$, we modify $D_i$ into a spanning spider $D_i'$ with $D_i'$  edge-disjoint from the spiders $D'_1, \dots, D'_{i-1},$ $D_{i+1}, \dots, D_{m}$ and $D_i'$ having the same root as $D_i$. In order to describe which edges we can use in $D_i'$, we make the following definition.
\begin{definition}
Let $\mathcal D= \{D_1, \dots, D_m\}$ be a family of edge-disjoint spiders in a coloured $K_n$.
Let $D_i=S_i\cup \hat D_i$ where $S_i$ is the star consisting of the ordinary leaves of $D_i$.
We let $G(D_i, \mathcal D)$ denote the subgraph of $K_n$ formed by deleting the following:
\begin{itemize}
\item All the roots  of the spiders $D_1, \dots, D_{i-1}, D_{i+1}, \dots, D_m$.
\item All the edges  of the spiders $D_1, \dots, D_{i-1}, D_{i+1}, \dots, D_m$.
\item All  edges  sharing a colour with $\hat D_i$.
\item All vertices of $\hat D_i$ except the root.
\end{itemize}
\end{definition}

 The intuition behind this definition is that we can freely modify $D_i$ using edges from $G(D_i, \mathcal D)$ without affecting the other spiders $D_1, \dots, D_{i-1}, D_{i+1}, \dots, D_m$. The following observation makes this precise.
\begin{observation}\label{ObservationExtendStar}
Let $\mathcal D= \{D_1, \dots, D_m\}$ be a family of rainbow  spiders in a coloured $K_n$. 
Let $D_i=S_i\cup \hat D_i$ where $S_i$ is the star consisting of the ordinary leaves of $D_i$. Then for any rainbow spider $\hat S_i$ in   $G(D_i, \mathcal D)$ with $S_i$ and  $\hat S_i$ having the same root, we have that $\hat S_i\cup \hat D_i$ is a rainbow spider in $K_n$. 

In addition  if $\mathcal D$ was edge-disjoint and root-covering, then $\mathcal D\setminus \{D_i\}\cup\{\hat S_i\cup \hat D_i\}$ is edge-disjoint and root-covering.
\end{observation}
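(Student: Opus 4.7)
The statement is essentially a verification that the four bullet points in the definition of $G(D_i, \mathcal D)$ are designed precisely to preserve the properties we want. My plan is to separate the claim into its four conclusions (spider, rainbow, edge-disjoint, root-covering) and check each directly from the definition.

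For the first sentence, I would begin by observing that $\hat D_i$ is itself a spider rooted at $r_i$: namely $D_i$ with its ordinary-leaf star $S_i$ stripped off. Since $G(D_i, \mathcal D)$ deletes every vertex of $\hat D_i$ other than $r_i$, and $\hat S_i$ lives in $G(D_i, \mathcal D)$, we have $V(\hat S_i) \cap V(\hat D_i) = \{r_i\}$; Observation~\ref{ObservationTwoDoubleStars} then immediately upgrades $\hat S_i \cup \hat D_i$ to a spider rooted at $r_i$. Rainbow-ness follows just as quickly: $\hat S_i$ and $\hat D_i$ are each rainbow, and no colour is shared because $G(D_i, \mathcal D)$ omits every edge whose colour appears in $\hat D_i$.

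For the second sentence, edge-disjointness is immediate: $G(D_i, \mathcal D)$ deletes all edges of the other spiders $D_j$ (so $\hat S_i$ meets none of them) and $\hat D_i \subseteq D_i$ was edge-disjoint from each such $D_j$ to begin with. The main subtlety I would expect to grapple with is root-covering. The roots of the new family are the same as those of $\mathcal D$, so for each $j \neq i$ one has $r_i \in V(D_j)$ directly from the original hypothesis; conversely each $r_k$ with $k \neq i$ belongs to $V(D_i) = V(\hat D_i) \cup V(S_i)$, and to conclude $r_k \in V(\hat S_i \cup \hat D_i)$ I will need $r_k \in V(\hat D_i)$, i.e.\ that $r_k$ is not merely an ordinary leaf of $D_i$. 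In the context where the observation is applied, the family produced by Step (1) via Lemma~\ref{LemmaCoverBipartiteGraph} is built so that each $r_k$ appears as a joint (or $x$-vertex) of $D_i$; this placement is exactly the structural content that the notion of ``root-covering'' is designed to sustain across Step (2), and once it is in place the verification closes.
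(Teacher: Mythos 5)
The paper does not actually prove Observation~\ref{ObservationExtendStar} --- it sits in the proof-sketch section and is offered as a self-evident consequence of the definition of $G(D_i,\mathcal D)$ --- so there is no written argument to compare against. Your proof of the first sentence and of edge-disjointness is exactly right and is the natural (essentially the only) way to read the definition: vertex-disjointness of $\hat S_i$ and $\hat D_i$ away from $r_i$ gives a spider via Observation~\ref{ObservationTwoDoubleStars}, colour-disjointness gives rainbow, and deleting the edges of the other $D_j$ gives edge-disjointness.

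The subtlety you flag in the root-covering step is real, and the observation as literally stated does not quite close: $G(D_i,\mathcal D)$ deletes every root $r_k$ with $k\neq i$, so $r_k\notin V(\hat S_i)$ always, and root-covering of the original family only guarantees $r_k\in V(D_i)=V(S_i)\cup V(\hat D_i)$ --- which leaves open the bad case $r_k\in V(S_i)\setminus\{r_i\}$, i.e.\ $r_k$ an ordinary leaf of $D_i$. Your resolution --- appeal to how Lemma~\ref{LemmaCoverBipartiteGraph} positions the roots as $x$-vertices --- is valid, but it is worth noting that the paper's rigorous version of this step in Lemma~\ref{LemmaDisjointSpanningDoubleStars} handles the issue more cleanly and without that appeal: there the decomposition is redefined so that $S_i$ is ``the star consisting of the ordinary leaves of $D_i$ \emph{disjoint from} $\{r_1,\dots,r_{i-1},r_{i+1},\dots,r_{\alpha n}\}$.'' Under that convention any root that happens to be an ordinary leaf is absorbed into $\hat D_i$, so $r_k\in V(\hat D_i)$ holds automatically and the verification closes at the level of the lemma's hypotheses rather than by tracing back to the construction in Step~(1). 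If you want a statement of the observation that is correct in isolation, you should either adopt that modified decomposition or add the hypothesis that no root is an ordinary leaf of $D_i$.
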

 
 A crucial feature of $G(D_i, \mathcal D)$ is that it has high minimum degree. 
\begin{observation}\label{ObservationHighMinimumDegree}
For a family of spiders $\mathcal D= \{D_1, \dots, D_m\}$ in a properly coloured $K_n$ with $D_i$ a $t$-spider we have $\delta(G(D_i, \mathcal D))\geq n-3m-4t-1$.
\end{observation}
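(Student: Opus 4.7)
The plan is to fix an arbitrary vertex $v\in V(G(D_i,\mathcal D))$ and lower-bound its degree by subtracting from $d_{K_n}(v)=n-1$ an upper bound on the number of edges at $v$ killed by each of the four deletion rules in the definition of $G(D_i,\mathcal D)$. Three of the four contributions are immediate: the $\leq m-1$ roots of the other spiders eliminate at most $m-1$ edges at $v$; the $2t$ non-root vertices of $\hat D_i$ eliminate at most $2t$ more; and the removal of edges sharing a colour with $\hat D_i$ eliminates at most $|E(\hat D_i)|=2t$ edges at $v$, since in a proper colouring each colour appears at most once at $v$.

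The only bound requiring a moment's thought is the contribution at $v$ from the edges of the other spiders $D_j$ ($j\neq i$) that are not already covered by the removal of $r_j$ above. Any such edge is a non-root edge of $D_j$, and hence in the spider notation must be one of the joint-leaf edges $j_k x_k$. Since the vertices of a spider are partitioned as root $\cup$ joints $\cup$ pendant leaves $\cup$ ordinary leaves, the vertex $v$ plays exactly one role in $D_j$, and only a joint or a pendant leaf is incident to a joint-leaf edge — and in each case to just one. Summing over the $\leq m-1$ other spiders gives a contribution of at most $m-1$.

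Combining the four bounds yields at most $(m-1)+2t+2t+(m-1)=2m+4t-2$ deleted edges at $v$, so $d_{G(D_i,\mathcal D)}(v)\geq n-1-(2m+4t-2)=n-2m-4t+1$, which is even slightly stronger than the required $n-3m-4t-1$. I do not anticipate any real obstacle here; the entire argument is a careful enumeration, and the only mildly subtle step is the structural bookkeeping for joint-leaf edges in the preceding paragraph.
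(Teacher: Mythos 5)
The paper states this as an \emph{Observation} without proof, so there is no author argument to compare against; what you have written is the natural verification, and it is correct. Your accounting of the four deletion rules is sound: at most $m-1$ edges at $v$ lost to the deleted roots, at most $m-1$ lost to the remaining (joint--pendant) edges of the other spiders since $v$ is incident to at most one such edge in each $D_j$, at most $2t$ lost to the $2t$ non-root vertices of $\hat D_i$, and at most $2t$ lost to colour deletion since $\hat D_i$ has $2t$ edges and the colouring is proper. The resulting bound $n-2m-4t+1$ is strictly stronger than the paper's stated $n-3m-4t-1$; the paper's constant is simply loose (any over-count of the double-covered root-adjacent edges of the $D_j$'s, for instance, gives the weaker $3m$), so there is no gap on your end.
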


To solve step (2) we consider the graph $G(D_i, \mathcal D)$ for $\mathcal D=\{D'_1, \dots, D'_{i-1}, D_i, \dots, D_{m}\}$. Using Observation~\ref{ObservationExtendStar} to solve (2) it is enough to find a spanning rainbow spider $D'_i$ in $G(D_i, \mathcal D)$ having the same root as $D_i$.
From Observation~\ref{ObservationHighMinimumDegree} we know that $G(D_i, \mathcal D)$ has high minimum degree.
Thus, to solve (2) it would be sufficient to show that ``every properly coloured graph with high minimum degree and a vertex $r$  has a spanning rainbow spider rooted at $r$.'' Unfortunately this isn't true since it is possible to have have a properly coloured graph $G$ with high minimum degree which has less than $|G|-1$ colours (and hence has no spanning rainbow tree).  

However, in a sense, ``having too few colours'' is the only barrier to finding a spanning rainbow spider in a high minimum degree graph. Lemmas~\ref{LemmaAddOneVertexToStar} and~\ref{LemmaAddManyVerticesToStar} will show that as long as there are enough edges of colours not touching $r$, then it is possible to find a spanning rainbow spider rooted at $r$ in a high minimum degree graph. This turns out to be sufficient to complete the proof of step (2) since it is possible to ensure that the graphs $G(D_i, \mathcal D)$ have a lot of edges of colours outside $D_i$. The details of this are somewhat complicated and explained in Section~\ref{SectionIsomorphicTrees}.

Step (3) is similar in spirit to step (2). It consists of going through the spiders  $D'_1, \dots, D'_{m}$ one by one, and modifying $D'_i$ into a spanning spider $D_i''$ with $D_i''$  edge-disjoint from the spiders $D''_1, \dots, D''_{i-1},$ $D'_{i+1}, \dots, D'_{m}$ and $D''_i$ having the same root as $D'_i$. We once again consider the graph  $G(D'_i, \mathcal D)$ for $\mathcal D=\{D''_1, \dots, D''_{i-1}, D'_{i}, \dots, D'_m\}$ and notice that it has high degree. Because of this, to prove  step (3) it is sufficient to show that ``in every properly coloured graph $G$ with high minimum degree and a spanning rainbow star $S$, there is a spanning rainbow $t$-spider for suitable $t$.'' This turns out to be true for $t\geq 3$, and is proved by replacing edges of $D'_i$ for suitable edges outside $D'_i$ (see Lemma~\ref{LemmaChangeDoubleStarParameter}).

\section{Many rainbow trees in 1-factorizations}

The proof of Theorem~\ref{Theorem1Factorization} naturally splits into two parts. In the first part we show that one can find large edge-disjoint rainbow trees $T_1, \dots, T_n$ with the property that any vertex in $V(T_i)\setminus V(T_j)$ has small degree in $T_i$. In the second part we modify the trees from the first part one by one into spanning trees.  The first part is summarized in the following lemma.
\begin{lemma}\label{Lemma1FactorizationCoverSet}
Let $m\equiv 1$ or $3 \pmod{6}$ and $n> 9m$. 
Let $G=K_{n}\setminus E(K_{n-m})$ be properly coloured with $n-1$ colours with $V(G)=A\dot\cup B$ where $B$ is the copy of $K_{n-m}$ and $|A|=m$.
Then $G$ has edge-disjoint rainbow $\left(\frac{m-1}{2}\right)$-spiders $D_1, \dots, D_{m}$  of order $n-(m-1)/2$ with each $D_i$ rooted in $A$ and covering all the vertices in $A$.
\end{lemma}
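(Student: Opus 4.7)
My plan is to build $D_1,\dots,D_m$ so that they in fact partition $E(G)$: a direct count shows that $m$ edge-disjoint $\left(\frac{m-1}{2}\right)$-spiders of order $n-\frac{m-1}{2}$ together carry $\frac{m(2n-m-1)}{2}=|E(G)|$ edges, so edge-disjointness and decomposition coincide here. First I fix the skeleton. Since $m\equiv 1$ or $3\pmod{6}$, a Steiner triple system on $A$ exists; orienting each triple as a $3$-cycle gives a regular tournament $\tau$ on $A$ with in- and out-degree $\frac{m-1}{2}$ at every vertex. Writing $S_i,T_i$ for the out- and in-neighbourhoods of $a_i$ in $\tau$, I let $D_i$ have root $a_i$, ordinary leaves $S_i$ (providing the $A$-edges $a_ia_j$) and distance-$2$ leaves $T_i$. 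A degree count---$\deg_G(a_j)=n-1$, $\deg_{D_j}(a_j)=n-m$, and $\deg_{D_i}(a_j)\geq 1$ for every $i\neq j$ because every $D_i$ must cover $A$---forces each $a_j$ with $j \neq i$ to be a leaf (never a joint) of $D_i$, so all joints must lie in $B$.

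Second, I specify the joints. Using a proper edge-colouring $\chi\colon E(K_A)\to[m]$, which uses exactly $m$ colours since $\chi'(K_m)=m$ for $m$ odd, together with $m$ distinct proxy vertices $b_1,\dots,b_m\in B$ (available because $|B|=n-m>8m$), I declare the joint of $D_i$ attaching $a_j\in T_i$ to $a_i$ to be $b_{\chi(a_ia_j)}$. This makes the joint set $J_i=\{b_{\chi(a_ia_j)}:j\in T_i\}$ and the $B$-exclusion set $R_i=\{b_{\chi(a_ia_k)}:k\in S_i\}$ each of size $\frac{m-1}{2}$ with $J_i\cap R_i=\emptyset$, because $\chi$ is proper at $a_i$. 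The remaining $n-2m+1$ vertices of $B\setminus(J_i\cup R_i)$ become the ordinary leaves of $D_i$, giving $|V(D_i)|=n-\frac{m-1}{2}$. Inspecting each $A$-$B$ edge $a_ib$ (it lies in $D_i$ when $b\notin R_i$, and in the unique $D_k$ with $k\in S_i$ and $b=b_{\chi(a_ia_k)}$ otherwise) verifies edge-disjointness, so the $D_i$ do partition $E(G)$.

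The main obstacle is the rainbow property. The $n-m$ root edges of $D_i$ at $a_i$ carry distinct colours automatically, and at each joint the paired root-to-joint and joint-to-distance-$2$-leaf edges are proper and hence differ in colour. What remains is to ensure that no joint-leaf colour $c(a_jb_{\chi(a_ia_j)})$ coincides with any other joint-leaf colour of $D_i$ or with any root-edge colour $c(a_iv)$ for $v\in V(D_i)\setminus\{a_i\}$. Re-expressed through the underlying $1$-factorisation of $K_n$, this demands that for each $j\in T_i$ the $1$-factor partner of $a_i$ in the colour class $c(a_jb_{\chi(a_ia_j)})$ lands inside the $(m-1)$-vertex set $T_i\cup R_i$, and that these partners are distinct across $j\in T_i$. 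I expect this to be the technical heart of the argument: the plan is to keep the proxy assignment $c\mapsto b_c$ (and if necessary the edge-colouring $\chi$) as free parameters, and to exploit the slack $n>9m$---which makes $|B|$ and the $(n-1)$-colour palette much larger than the $O(m^2)$ bad-event constraints---via a probabilistic or greedy-alteration argument so that all bad coincidences can be avoided simultaneously.
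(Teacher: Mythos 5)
Your skeleton is essentially the paper's: a Steiner triple system on $A$, cyclically oriented to give a regular tournament, with $D_i$ rooted at $a_i$, out-neighbours as ordinary leaves in $A$, in-neighbours as distance-two leaves attached via joints in $B$, and a partition $E(G)=\bigsqcup E(D_i)$. You also correctly identify the rainbow condition: for each $j\in T_i$, the colour of the joint--leaf edge must be one of the $m-1$ colours ``freed up'' at $a_i$, namely those of $a_ia_k$ ($k\in T_i$) or $a_ib$ ($b\in R_i$).

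The gap is in the joint-assignment scheme. Tying joints to a proper colouring $\chi$ of $K_A$ and only $m$ proxy vertices $b_1,\dots,b_m$ gives far too few degrees of freedom. Each proxy $b_c$ serves as a joint for all $\frac{m-1}{2}$ pairs in the colour class $\chi^{-1}(c)$, and for each such pair $\{a_i,a_j\}$ (with $j\in T_i$) the rainbow condition demands $c(a_jb_c)$ to lie in a prescribed set of only $m-1$ colours. Since $c(a_j\cdot)$ is injective on $B$, each of these $\frac{m-1}{2}$ constraints is satisfied by at most $m-1$ of the $n-m>8m$ candidate vertices; generically the intersection over all $\frac{m-1}{2}$ constraints is empty, so no valid choice of $b_c$ exists. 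Your heuristic that ``$|B|$ and the palette are much larger than the $O(m^2)$ bad events'' is backwards --- both have size $O(m)\ll m^2$. No greedy or probabilistic perturbation of $\chi$ and the $m$ proxies can repair this: you need on the order of $\binom{m}{2}$ independent joint choices, not $m$.

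The paper's proof gets exactly this freedom. It assigns a vertex $b(x,y,z)\in B$ to every \emph{ordered} triple in $\vec{\mathcal S}$, one colour choice per unordered triple (so $|\mathcal S|=\binom{m}{2}/3$ independent choices), and crucially builds the rainbow property into the assignment via condition (i): $c(xb(x,y,z))=c(yb(y,z,x))=c(zb(z,x,y))$. This forces the colour of each joint--leaf edge $zb(z,x,y)\in D_x$ to equal the colour of the excluded root edge $xb(x,y,z)$, so that every colour of $D_x$ appears on the star at $x$ and rainbowness is automatic. The greedy proof of Claim~\ref{ClaimAssignVerticesToTriples} then succeeds because a single triple has at most $9(|A|-1)$ bad colours out of $n-1>9m-1$, which is a per-triple count rather than a per-proxy count, so there is always a good colour to pick. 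You would need to replace the $m$-proxy scheme with this per-triple colour-synchronised assignment for the argument to close.
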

\begin{proof}
Recall that a Steiner triple system is a $3$-uniform hypergraph $\mathcal S$ with the property that for any pair of vertices $x,y\in V(\mathcal S)$, there is precisely one edge in $\mathcal S$  containing both $x$ and $y$. It is well known that a Steiner triple system with $m$ vertices if, and only if, $m \equiv 1$ or $3 \pmod{6}$.
Therefore, we can choose a  Steiner triple system $\mathcal S$ with vertex set $A$ (which exists since  $|A|=m\equiv 1$ or $3 \pmod{6}$). For a vertex $x\in A$ and a colour $c$,  let $v(x,c)$ be the unique vertex $v$ with $c(xv)=c$.

Choose a cyclic orientation $(x,y,z)$ for each $\{x,y,z\}\in \mathcal{S}$. Formally, this a family of ordered triples $\vec{\mathcal S}\subseteq A\times A\times A$ where for every $\{x,y,z\}\in \mathcal{S}$ we either have $(x,y,z), (y,z,x), (z,x,y)\in \vec{\mathcal S}$ or $(z,y,x), (x,z,y), (y,x,z)\in \vec{\mathcal S}$ (but not both).

\begin{claim}\label{ClaimAssignVerticesToTriples}
To every triple $(x,y,z)\in \vec{\mathcal S}$, we can assign a vertex $b(x,y,z)$ with the following properties.
\begin{enumerate}[(i)]
\item $c(xb(x,y,z))=c(yb(y,z,x))=c(zb(z,x,y))$.
\item $b(x,y,z)\in B$.
\item $b(x,y,z)\neq b(x, u, v), b(v,x,u), b(y,u,v), b(v,y,u)$ for any $u$ and $v$.
\end{enumerate}
\end{claim}
\begin{proof}
To produce such an assignment, we go through every triple $(x,y,z)\in \vec {\mathcal{S}}$ and choose vertices $b(x,y,z)$, $b(y,z,x)$, $b(z,x,y)$ satisfying (i) -- (iii) with respect to the previously chosen vertices. 

Since $b(x,y,z)$, $b(y,z,x)$, $b(z,x,y)$ need to satisfy (i), notice that we must have $b(x,y,z)=v(x,c)$, $b(y,z,x)=v(y,c)$, $b(z,x,y)=v(z,c)$ for some colour $c$. 
Therefore, we just need to choose some colour $c$ for which (ii) -- (iii) hold with the choice $b(x,y,z)=v(x,c)$, $b(y,z,x)=v(y,c)$, $b(z,x,y)=v(z,c)$.

We claim that there are at most $3(|A|-1)$ colours $c$ for which $b(x,y,z)=v(x,c)$ wouldn't satisfy (ii) and (iii) with respect to the previously chosen vertices.
There are  $|A|-1$ colours  for which $v(x,c)\in A$, and hence $|A|-1$ colours for which (ii) doesn't hold for $b(x,y,z)=v(x,c)$.
There are $(|A|-1)/2$ triples $\{x,u,v\}\in \mathcal S$  containing $x$, and hence $(|A|-1)/2$ ordered triples of the form $(x, u, v)\in \vec{\mathcal S}$ for $u,v\in A$. This shows that there are at most $(|A|-1)/2$ colours for which $v(x,c)$ could equal  $b(x, u, v)$ for a previously chosen vertex. Similarly, there are at most $(|A|-1)/2$  colours for which $v(x,c)$ could equal each of  $b(v,x,u), b(y,u,v), b(v,y,u)$ for a previously chosen vertex. In total this gives at most $4\cdot (|A|-1)/2$ colours for which  (iii) might not hold for $b(x,y,z)=v(x,c)$ with respect to the previously chosen vertices.

By symmetry, we have that there are at most $3(|A|-1)$ colours $c$ for which $b(y,z,x)=v(y,c)$ wouldn't satisfy (ii) and (iii), and   at most $3(|A|-1)$ colours $c$ for which $b(z,x,y)=v(z,c)$ wouldn't satisfy (ii) and (iii). In total this shows that there are at most $9(|A|-1)$ colours for which any of  $b(x,y,z)=v(x,c)$, $b(y,z,x)=v(y,c)$, $b(z,x,y)=v(z,c)$ might not satisfy (ii) and (iii) with respect to the previously chosen vertices.  Since the number of colours is $n-1\geq 9m-1> 9(|A|-1)$, there is some colour $c$ which we can choose  so that $b(x,y,z)=v(x,c)$, $b(y,z,x)=v(y,c)$, $b(z,x,y)=v(z,c)$ satisfy (i), (ii), and (iii).
\end{proof}

Let $A=\{1, \dots, {m}\}$. For $x=1, \dots, m$, define 
\begin{align*}
D_x^1&= \{zb(z,x,y): (z,x,y)\in \vec{\mathcal S}\}\\
D_x^2&= \{xy: (x,y,z)\in \vec{\mathcal S}\}\\
D_x^3&= \{xb: \text{$b\in B$ and $b\neq b(x,y,z)$ for $(x,y,z)\in \vec{\mathcal S}$}\}\\
D_x&=D_x^1\cup D_x^2\cup D_x^3.
\end{align*}

\begin{figure}
\centering
    \includegraphics[width=0.6\textwidth]{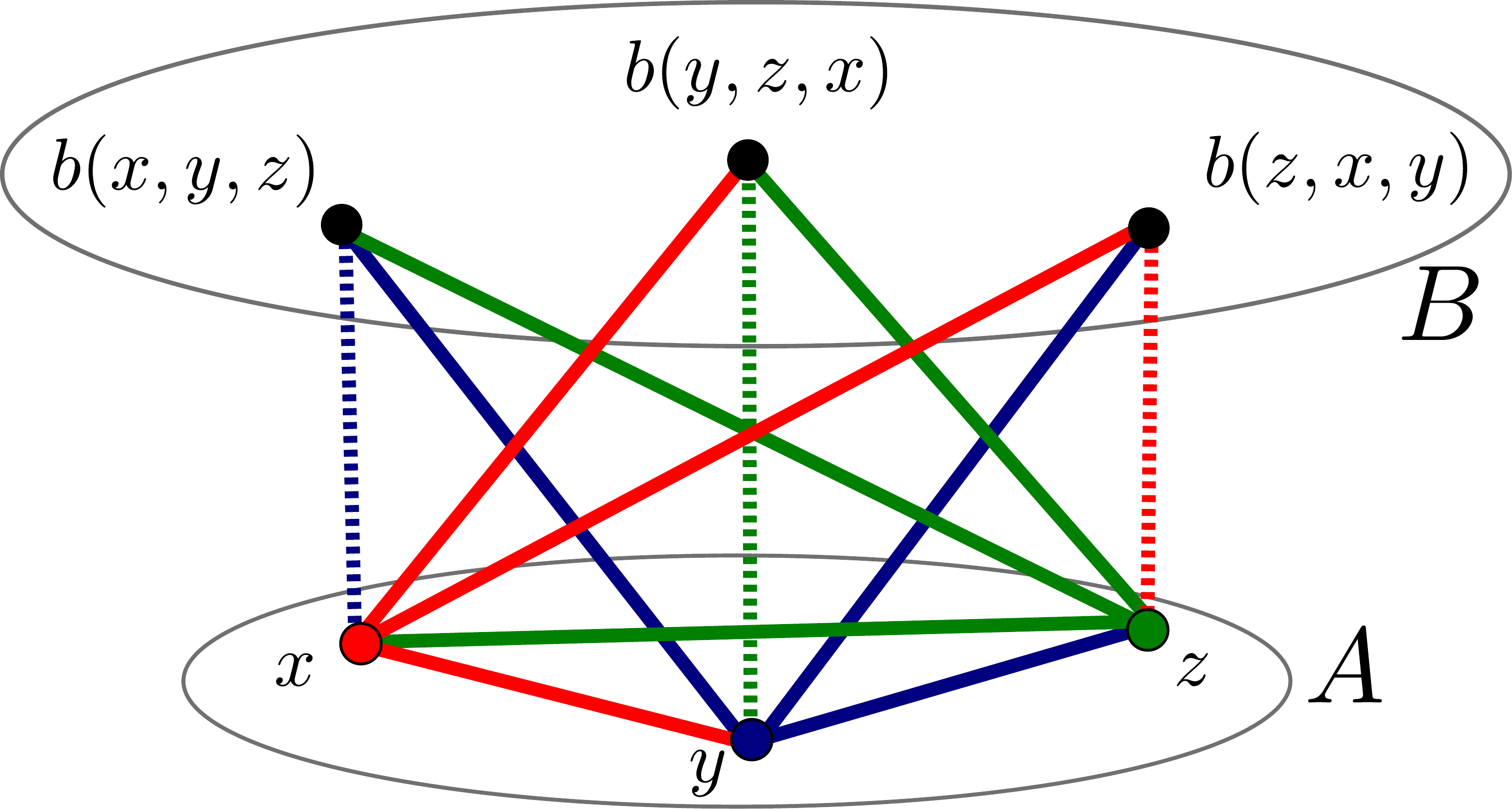}
  \caption{\small How edges involving $x, y, z$ are distributed between the spiders $D_x, D_y$, and $D_z$ for a triple $(x,y,z)\in \vec{\mathcal S}$. Here the colours do not correspond to the colours of edges in $G$, but rather to the three spiders $D_x, D_y$, and $D_z$: red edges are in $D_x$, blue edges are in $D_y$, and green edges are in $D_z$. The three dashed edges all have the same colour in $G$ as a consequence of Claim~\ref{ClaimAssignVerticesToTriples} (i). \label{Figure1FactorizationPartition}} 
\end{figure}

See Figure~\ref{Figure1FactorizationPartition} to see how the spiders $D_x, D_y$, and $D_z$  look for a triple  $(x,y,z)\in \vec{\mathcal S}$. 
We claim that $D_1, \dots, D_{m}$ satisfy the conditions of the lemma. 
To see that $D_x$ is rainbow, notice that using (i), the colours in $D_x$ are exactly the colours in the star in $G$ containing $x$ (which are all different since $G$ is properly coloured).
We have $e(D_x^1)=e(D_x^2)=(|A|-1)/2$ and $e(D_x^3)=|B|-(|A|-1)/2$ which implies that $e(D_x)=|B|+(|A|-1)/2=n-(m+1)/2$ as required.

To see that $D_x$ is a $(\frac{m-1}2)$-spider, first notice that  $D_x^2\cup D_x^3$ is a star.
Next notice that $D_x^1$ is a matching since by (iii) we have $b(z,x,y)\neq b(z',x,y')$ for any distinct ordered triples $(z,x,y),(z',x,y')\in \vec{\mathcal S}$. Notice that for every edge $zb(z,x,y)\in D_x^1$ we have $|\{z,b(z,x,y)\}\cap D_x^2\cup D_x^3|=1$ 
(This is true because $z\not\in D_x^2\cup D_x^3$ and $b(z,x,y)\in D_x^3$. 
To see that $z\not\in D_x^3$, notice that $D_x^3\subseteq B\cup \{x\}$, $z\in A$, and $z\neq x$.
To see that $z\not\in  D_x^2$, notice that since $\vec{\mathcal S}$ is an oriented Steiner triple system containing $(z,x,y)$, we do not  have  $(x,z,y')\in \vec{\mathcal S}$ for any $y'\in A$.
To see that $b(z,x,y)\in D_x^3$ notice that $xb(z,x,y)\in D_x^3$ which holds since by (iii) we have $b(z,x,y)\neq  b(x,y',z')$ for any $y', z'$).
We have  that  $D_x^2\cup D_x^3$ is a star and $D_x^1$ is a matching with $|e\cap (D_x^2\cup D_x^3)|=1$ for $e\in D_x^1$.
Since  $e(D_x^1)=\frac{m-1}2$ and $x\not\in V(D_x^1)$, Observation~\ref{ObservationStarPlusMatching} implies that $D_x= D_x^1 \cup (D_x^2\cup D_x^3)$ is a $(\frac{m-1}2)$-spider as required.

To see that $D_x$ covers $A$, notice that since $\vec{\mathcal S}$ is an orientated Steiner triple system, for any $y\in A$ either $(y, x, z)\in \vec{\mathcal S}$ or $(x, y, z)\in \vec{\mathcal S}$ holds for some $z$. In the first case $yb(y, x, z)\in D^1_x$ and in the second case $xy\in D^2_x$.

It remains to show that $D_x$ and $D_y$ are edge-disjoint for $x\neq y$. 
We have that $D_x^2$ is edge-disjoint from $D_y^1\cup D_y^3$ since the edges in $D_x^2$  go from $A$ to $A$, while the edges in $D_y^1\cup D_y^3$  go from $A$ to $B$. 
Similarly we have that $D_x^1\cup D_x^3$ is edge-disjoint from $D_y^2$.
We have that $D_x^2$ is edge-disjoint from $D_y^2$ since we do not have $(x,y,z), (y,x,z') \in \vec{\mathcal S}$ for any $z, z'$ (since $\vec{\mathcal S}$ is an oriented Steiner triple system).
We have that $D_x^3$  is edge-disjoint from $D_y^3$ since edges in $D_x^3$  go from $x$ to $B$, whereas edges in $D_y^3$  go from $y$ to $B$.
We have that $D_x^1$ is edge-disjoint from $D_y^1$ since $b(z,x,u)\neq b(z,y,w)$ from (iii).
To see  that $D_x^1$ is edge-disjoint from $D_y^3$ notice that the only edge in $D_x^1$ passing through $y$ is $yb(y,x,w)$ for some $w$. However $yb(y,x,w)\not\in  D_y^3$ by definition of $D_y^3$.
By the same argument, we have that $D_y^1$ is edge-disjoint from $D_x^3$, completing our proof of  $D_x$ and $D_y$ being edge-disjoint.
\end{proof}
We remark that the above lemma actually gives a decomposition of all the edges of $G$ into  disjoint spiders.
Lemma~\ref{Lemma1FactorizationCoverSet} is combined with the following lemma which allows us to modify  a large rainbow spider into a spanning rainbow tree. 
\begin{lemma}\label{Lemma1FactorizationMakeSpanningTree}
Suppose that $\delta +5.5\alpha< 1$.
Let $G$ be a sufficiently large properly coloured graph on $n$ vertices with $n-1$ colours each having  at least $ (1-2\alpha)n/2$ edges. Let $D$ be a rainbow $(\leq \alpha n/2)$-spider in $G$  rooted at $r$ of order  at least $\left(1-\frac{\alpha}{2}\right)n$ such that every $v\not\in V(D)$ has $d(v)\geq (1-\delta)n$.
Then $G$ has a spanning rainbow tree $T$ with  $d_{T_i}(u)\leq 3$ for $u\neq r$.
\end{lemma}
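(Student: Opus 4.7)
The plan is to extend the rainbow spider $D$ to the desired spanning rainbow tree by iteratively attaching the vertices of $U := V(G) \setminus V(D)$ one at a time, each absorbing exactly one fresh colour. Set $k := |U| \leq \alpha n/2$, and note that $D$ uses $|D|-1$ of the $n-1$ colours of $G$, so there are exactly $k = (n-1)-(|D|-1)$ \emph{fresh} colours not in $D$. In a rainbow spanning tree on $n-1$ edges every colour appears exactly once, so each fresh colour must show up among the $k$ new edges we add.

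Order $U = \{u_1, \dots, u_k\}$ arbitrarily and set $T_0 := D$. At step $i$ I would extend $T_{i-1}$ to $T_i$ by one of two moves, each contributing exactly one fresh colour. \emph{Direct attachment}: find $v \in V(T_{i-1})$ with $d_{T_{i-1}}(v) \leq 2$ or $v=r$, and with $c(u_iv)$ fresh; add the edge $u_iv$. \emph{Subdivision swap}: find an ordinary leaf $y$ of $D$ such that $u_iy \in E(G)$ and $c(u_iy) = c(ry)$, together with a vertex $z \in V(T_{i-1})\setminus\{y\}$ with $d_{T_{i-1}}(z)\leq 2$ or $z=r$ and with $u_iz\in E(G)$ of fresh colour; then remove $ry$, and add $u_iy$ (reusing the colour of $ry$) and $u_iz$ (using a fresh colour). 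In both cases $T_i$ is rainbow, has exactly one more vertex and one more edge than $T_{i-1}$, and preserves $d_{T_i}(v)\leq 3$ for every $v\neq r$ (in the direct case $v$'s degree rises from at most $2$ to at most $3$; in the swap case $y$'s degree is unchanged, $u_i$ has degree $2$, and $z$ rises from at most $2$ to at most $3$).

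The crux is showing that at every step one of these two moves is available. A direct count using the proper colouring shows that the number of fresh-colour edges at $u_i$ going into $V(T_{i-1})$ can be as small as $\approx 2-\delta n$, so direct attachment alone is not guaranteed. The subdivision swap provides the essential backup: it succeeds as soon as some ordinary leaf $y$ satisfies the coincidence $c(u_iy)=c(ry)$ while a fresh-colour edge $u_iz$ exists at $u_i$. Here the two remaining hypotheses become crucial: $D$ has at least $(1-\tfrac{3\alpha}{2})n-1$ ordinary leaves, and each colour class has at least $(1-2\alpha)n/2$ edges, so most leaf-edge colours $c(ry)$ appear at $u_i$ (giving many candidate leaves $y$); meanwhile the root $r$ automatically has degree $\geq (1-\alpha)n-1$ in $G$ from its degree in $D$, which gives abundant options for the auxiliary vertex $z$. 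A double-count over the $(y,z)$-pairs extracts a valid swap whenever the direct count fails. Accumulating losses over $k$ iterations---leaves used up by previous swaps, vertices already pushed to degree $3$, and fresh colours consumed---one finds that the process goes through provided the total $\alpha$- and $\delta$-contributions sum to less than $1$, which is exactly the condition $\delta+5.5\alpha<1$. The main obstacle is precisely this uniform accounting; handling the tight interplay between swap availability and degree/colour bookkeeping simultaneously across all $k$ steps is the delicate part, and the constant $5.5$ is the smallest one that makes the balance close.
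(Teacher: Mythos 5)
Your proposal has the right top-level shape (build $T_0=D$ and add the $k=|V(G)\setminus V(D)|$ missing vertices one at a time, each step consuming exactly one fresh colour while keeping degrees at most $3$ away from $r$), but both of the moves you propose are broken in ways that the paper's proof is specifically designed to avoid.

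\textbf{The subdivision swap is impossible.} You require an ordinary leaf $y$ of $D$ with $c(u_iy)=c(ry)$. But $u_i\notin V(D)$, so $u_i\neq r$, and hence $u_iy$ and $ry$ are two distinct edges of $G$ incident to the same vertex $y$; a proper colouring forbids $c(u_iy)=c(ry)$. So the swap you describe can never be performed, and the ``many candidate leaves $y$'' argument cannot be salvaged: the obstruction is not quantitative, it is structural.

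\textbf{Both moves require a fresh-colour edge at $u_i$, which may not exist.} In the direct attachment you add $u_iv$ of fresh colour; in the swap you add $u_iz$ of fresh colour. But the number of non-fresh colours is $|D|-1+(i-1)\approx(1-\alpha/2)n$, while $d(u_i)\geq(1-\delta)n$. If $\delta>\alpha/2$ (which is precisely the regime of the application, $\delta=1/3$, $\alpha=1/9$), then every single edge at $u_i$ can carry a non-fresh colour, so no move of the form ``attach $u_i$ by a fresh-colour edge'' is available. Your remark that the count ``can be as small as $\approx 2-\delta n$'' actually tells you it can be \emph{zero}, and the swap does not help because it asks for the very same thing.

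\textbf{What the paper does instead.} The fresh colour $c_i$ is introduced on an edge $x_iy_i$ between two vertices that are \emph{already leaves} of $T_{i-1}$ — not on an edge at the new vertex $i$. Since the colour class $c_i$ has $\geq(1-2\alpha)n/2$ edges and $T_{i-1}$ has $\geq(1-\alpha)n-i$ leaves, there are $\geq(1-5\alpha)n/2$ such leaf--leaf edges of colour $c_i$, so a large pool of choices for $\{x_i,y_i\}$. The new vertex $i$ is then attached by an edge $iz_i$ whose colour \emph{matches} the colour of the deleted edge $x_iw_i$ (where $w_i$ is the tree-neighbour of $x_i$). This colour-reuse is legal because $iz_i$ and $x_iw_i$ are vertex-disjoint. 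The existence of a compatible pair $(x_i,z_i)$ then follows from a pigeonhole count: $T_{i-1}$ is rainbow, so the colours $c(xw_x)$ over the $\geq(1-5\alpha)n$ endpoints of the leaf--leaf $c_i$-edges are all distinct; the colours $c(iz)$ over the $\geq(1-\delta-\alpha/2)n-1$ low-degree $G$-neighbours $z$ of $i$ are also all distinct; since $(1-5\alpha)+(1-\delta-\alpha/2)>1$ under $\delta+5.5\alpha<1$, the two colour sets intersect. This is where $5.5\alpha$ actually comes from, and why the fresh colour must be placed on a leaf--leaf edge and not at $u_i$.
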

\begin{proof}
Without loss of generality, we may suppose that $D$ has order exactly $\left(1-\frac{\alpha}{2}\right)n$.
Let the vertices of $G\setminus V(D)$ be labeled $1, \dots, \alpha n/2$. Since there are exactly $\alpha n/2$ colours outside $D$, we can associate a distinct colour $c_v\not\in D$ to every vertex $v\not\in V(D)$.

We define  trees $T_0, T_1, \dots, T_{\alpha n/2}$ with $V(T_i)=V(D)\cup\{1, \dots, i\}$.  They will have the following properties.
\begin{enumerate}[(i)]
\item $T_i$ is a rainbow tree with $V(T_i)=V(D)\cup\{1, \dots, i\}$ using colours in $D$, and $c_1, \dots, c_i$.
\item For $u\neq r$ we have $d_{T_i}(u)\leq 3.$
\item $T_i$ has at least $(1-\alpha )n-i$ leaves.
\item $T_i$ has at most $i$ vertices $w$ with $d_{T_i}(w)=3$. 
\end{enumerate}
Notice that if we can construct such a sequence then the tree $T_{\alpha n/2}$ satisfies the conclusion of the theorem. Indeed $T_{\alpha n/2}$ is a spanning rainbow tree by (i) and $d_{T_i}(u)\leq 3$ for $u\neq r$ by (ii).
Thus it remains to show that we can construct such a sequence of trees.

Let $T_0=D$ and notice that (i) -- (iv) hold by the assumptions of the lemma. 
For $0<i<\alpha n/2$, suppose that we have a tree $T_{i-1}$ satisfying (i) -- (iv). We will construct a tree $T_i$ satisfying (i) -- (iv). First we need the following claim, which identifies the vertices which need to be modified when passing from $T_{i-1}$ to $T_i$.
\begin{claim}\label{ClaimxyzwProperties}
There are four vertices  $x_i, y_i, z_i, w_i$ with the following properties.
\begin{enumerate}[(I)]
\item $x_i, y_i, z_i, w_i \in T_{i-1}$.
\item $x_i$ and $y_i$ are leaves of $T_{i-1}$, and $w_i$ is the (unique) neighbour of $x_i$ in $T_{i-1}$.
\item $x_iy_i\in E(G)$ with $c(x_iy_i)=c_i$
\item $d_{T_{i-1}}(z_i)\leq 2$.
\item $iz_i\in E(G)$ with $c(iz_i)=c(x_iw_i)$.
\item $z_i\neq x_i, w_i$. Also $x_i, y_i, w_i$ are distinct.
\end{enumerate}
\end{claim}
See Figure~\ref{Figure1FactorizationTree} to see what the vertices  $x_i, y_i, z_i, w_i$ look like.
\begin{figure}
  \centering
     \includegraphics[width=0.75\textwidth]{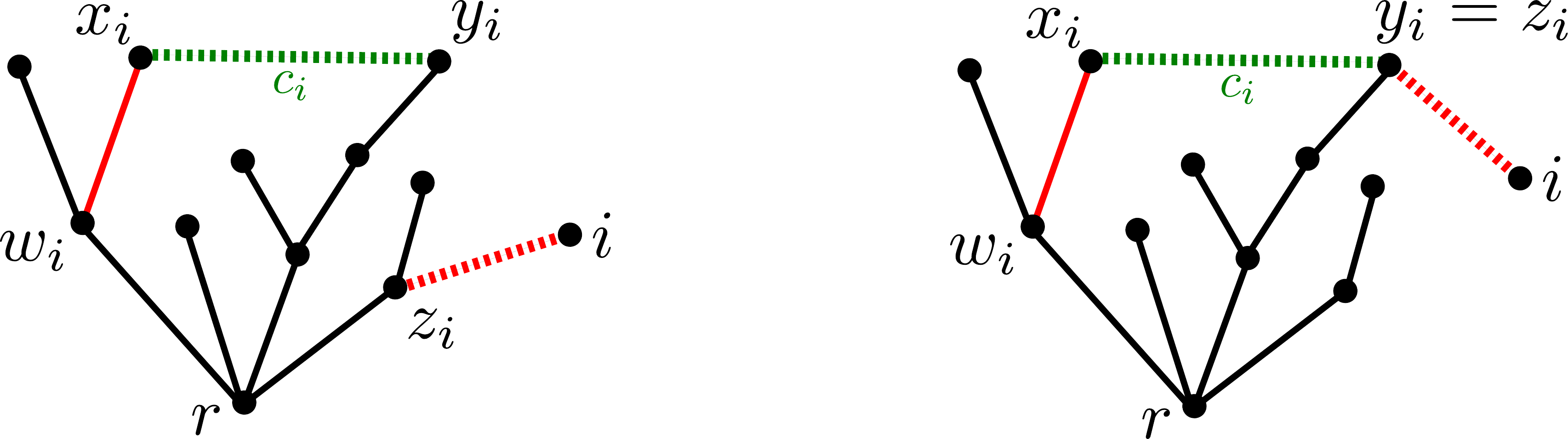}
  \caption{\small The vertices $i, x_i, y_i, z_i, w_i$ from Claim~\ref{ClaimxyzwProperties}. There are two slightly different cases pictured depending on whether $z_i= y_i$ or $z_i\neq y_i$.
  The solid edges are the edges of $T_{i-1}$ while the dashed edges are outside $T_{i-1}$. The tree $T_i=T_{i-1}+x_iy_i+iz_i-x_iw_i$ is constructed by switching the red solid edge for the red dashed edge, and also adding the green dashed edge.  \label{Figure1FactorizationTree}}
\end{figure}
\begin{proof}
Let $P$ be the set of colour $c_i$ edges in $G$ both of whose endpoints are leaves in $T_{i-1}$.
Since there are  at least $ (1-2\alpha)n/2$ colour $c_i$ edges in $G$,  at least $ (1-\alpha )n-i+1$ leaves in $T_{i-1}$ (by (iii)), and $i\leq \alpha n /2$, we have  $|P|\geq (1-2\alpha)n/2-\alpha n-i+1\geq (1-5\alpha)n/2$.

Since $d(i)\geq (1-\delta) n$, $T_{i-1}$ has at most $i$ vertices of degree  at least $ 3$ (by (ii) and (iv)), and $|T_{i-1}|= |D|+i-1= \left(1-\frac{\alpha}{2}\right)n+i-1$,  there is a set $Z\subseteq T_{i-1}\cap N(i)$ with  $|Z|\geq (1-\delta) n-\alpha n/2-1$ and $d_{T_{i-1}}(z)<3$ for all $z\in Z$. 

Since $G$ is properly coloured $P$ is a matching. For every vertex $x\in V(P)$, let $w_x$ be the neighbour of $x$ in $T_{i-1}$ ($w_x$ is unique since $x$ is a leaf in $T_{i-1}$). Since $T_{i-1}$ is rainbow (by (i)), we have that the colours $c(xw_x)$ are different for all $x\in V(P)$.  Since $|V(P)|+|Z|\geq  (1-5\alpha)n + (1-\delta-\alpha/2)n-1>n-1$, there is a colour $c$ which occurs in both $\{c(xw_x): x\in V(P)\}$ and $\{c(iz):z\in Z\}$. Let $x_i\in V(P)$ and $z_i\in Z$ be the vertices with $c(x_iw_{x_i})=c=c(iz_i)$. Let $w_i=w_{x_i}$ and let $y_i\in V(P)$ be the colour $c_i$ neighbour of $x_i$. 

We claim that (I) -- (VI) hold for $x_i, y_i, z_i, w_i$. 
For (VI), notice that we have ``$x_i\neq z_i$ and $w_i\neq z_i$''  since $c(x_iw_i)=c(iz_i)$ and $G$ is properly coloured. 
For (VI), we have that $x_i, y_i, w_i$ are distinct since $x_i$ and $y_i$ are two leaves of $T_{i-1}$ with $x_iy_i$ is an edge, and $w_i$ is not a leaf of $T_{i-1}$.
For condition (V), $iz_i\in E(G)$ comes from $Z\subseteq N(i)$ and $c(iz_i)=c(xw_i)$ comes from ``$c(x_iw_{x_i})=c=c(iz_i)$''.
Condition (IV) comes from ``$d_{T_{i-1}}(z)<3$ for all $z\in Z$'' and ``$z_i\in Z$''.
Condition (III) holds since $x_iy_i$ is an edge in $P$.
For condition (II), $x_i$ and $y_i$ being leaves of $T_{i-1}$ comes from all the vertices in $V(P)$ being leaves in $T_{i-1}$, while $w_i$ being the unique neighbour of $x_i$ in $T_{i-1}$ comes from $w_i=w_{x_i}$ and the definition of ``$w_x$''.
Condition (I) holds since $V(P),Z\subseteq T_{i-1}$ by definition of $V(P)$ and $Z$, and $w_i=w_{x_i}\in T_{i-1}$.
\end{proof}

Let $T_i=T_{i-1}+x_iy_i+iz_i-x_iw_i$. We claim that (i) -- (iv) hold for this tree.
First notice that the following all hold from Claim~\ref{ClaimxyzwProperties}, $T_{i-1}$ satisfying (i) -- (iv), and $T_i=T_{i-1}+x_iy_i+iz_i-x_iw_i$. 
\begin{align}\label{EqDegrees}
 \begin{array}{l}
\hspace{0.25cm} d_{T_i}(i)=1 \\
\hspace{0.05cm}d_{T_i}(x_i)=d_{T_{i-1}}(x_i)\\
d_{T_i}(w_i)=d_{T_{i-1}}(w_i)-1 \\
 \end{array}
 \hspace{0.2cm}
d_{T_i}(y_i)=
\begin{cases}
2&\text{ if $y_i\neq z_i$}\\
3&\text{ if $y_i= z_i$}
\end{cases}
\hspace{0.2cm}
d_{T_i}(z_i)=
\begin{cases}
d_{T_{i-1}}(z_i)+1&\text{ if $y_i\neq z_i$}\\
3&\text{ if $y_i= z_i$}
\end{cases}.
\end{align}
From ``$T_i=T_{i-1}+x_iy_i+iz_i-x_iw_i$'', we have that the only vertices whose degrees could change from $T_{i-1}$ to $T_i$ are  $i, x_i, y_i, z_i, w_i$.
For (iii), notice that $T_i$ has one new leaf (vertex $i$), and two vertices which were leaves in $T_{i-1}$ but may not be in leaves in $T_{i}$ (vertices $y_i$ and $z_i$). This shows that $T_i$ has at most one less leaf than $T_{i-1}$ which proves (iii).
For (ii), notice that (\ref{EqDegrees}) shows that all vertices, except $r$ and possibly $z_i$ have degree at most $3$ in $T_i$. We have $d_{T_{i-1}}(z_i)\leq 3$ by (\ref{EqDegrees}) and (IV).  For (iv),  notice that (\ref{EqDegrees}) shows that the only new vertex of degree $3$ in $T_i$ can be $z_i$.

For condition (i), notice that $T_i$ is  rainbow  using the colours of $T_{i-1}$ plus $c_i$ (since $T_i=T_{i-1}+x_iy_i+iz_i-x_iw_i$, the colour $c_i=c(x_iy_i)$ doesn't appear in $T_{i-1}$ and $c(iz_i)=c(x_iw_i)$).
We also have that $V(T_i)=V(T_{i-1})\cup \{i\}=V(D)\cup\{1, \dots, i\}$. Finally $T_{i}$  is a tree since it is obtained from the tree $T_{i-1}-x_iw_i$ by adding two leaves.
\end{proof}

Combining Lemmas~\ref{Lemma1FactorizationCoverSet} and~\ref{Lemma1FactorizationMakeSpanningTree}, it is easy to find $n/9-6$ edge-disjoint spanning rainbow trees in any properly $(n-1)$-coloured $K_n$ .
\begin{proof}[Proof of Theorem~\ref{Theorem1Factorization}]
Choose some $m \in [n/9-6, n/9-1]$ with $m\equiv 1$ or $3 \pmod 6$. Let $A$ be any set of $m$ vertices. 
By Lemma~\ref{Lemma1FactorizationCoverSet}, there is are edge-disjoint rainbow $(\leq n/18)$-spiders $D_1, \dots, D_{m}$  of order at least $(1-1/18)n$ with each $D_i$ rooted in $A$ and covering all the vertices in $A$. 

We  repeatedly apply Lemma~\ref{Lemma1FactorizationMakeSpanningTree} to the spiders $D_1, \dots, D_{m}$ in order to find disjoint spanning trees $T_1, \dots, T_{m}$ with $d_{T_i}(v)\leq 3$ for every $v\not\in A$. At the $i$th application, let $G=K_n\setminus (E(T_1)\cup\dots\cup E(T_{i-1})\cup E(D_{i+1})\cup\dots\cup E(D_{m}))$ and notice that we have $d_G(v)\geq (1-1/3)n$ for every $v\not\in A$. In addition, since the trees $T_1, \dots, T_{i-1}, D_{i+1},\dots, D_{m}$ are rainbow every colour has at least $n/2-n/9=(1-2/9)n/2$ edges in $G$. Therefore we can apply Lemma~\ref{Lemma1FactorizationMakeSpanningTree} with $\delta=1/3$ and $\alpha=1/9$ in order to find a spanning rainbow tree $T_i$ in $G$ with $d_{T_i}(v)\leq 3$ for every $v\not\in A$ as required. 
\end{proof}

\section{Isomorphic trees in proper colourings}\label{SectionIsomorphicTrees}
In this section we prove Theorem~\ref{TheoremProperColouring}. First we prove a number of auxiliary lemmas which we will need. 

\subsection*{Rainbow matchings}
To prove Theorem~\ref{TheoremProperColouring}, we will need some auxiliary results about rainbow matchings. We gather such results here. The following lemma gives an simple bound on how large a rainbow matching a coloured graph has. 
\begin{lemma}\label{LemmaLargeMatchingSmallMaximumDegree}
Let $G$ be a coloured graph with at most  $b$ edges of each colour. Then $G$ has a rainbow matching of size $\frac{e(G)}{2\Delta(G)+b}$.
\end{lemma}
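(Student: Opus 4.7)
The plan is to prove the lemma by a standard greedy/maximal-matching argument. I would not try to construct the matching explicitly; instead I would take any \emph{maximal} (not maximum) rainbow matching $M\subseteq E(G)$ and simply count.

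The key observation, which is where maximality is used, is that for every edge $e\in E(G)\setminus M$ there must exist some $m\in M$ such that $e$ and $m$ share either a vertex or a colour; otherwise $M\cup\{e\}$ would still be a rainbow matching, contradicting maximality. So it suffices to upper bound, for each fixed $m\in M$, the number of edges of $G$ other than $m$ that it ``blocks.''

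Next I would fix $m=uv\in M$ and bound this quantity in two pieces. Edges of $G$ different from $m$ that share a vertex with $m$ number at most $(d(u)-1)+(d(v)-1)\le 2\Delta(G)-2$. Edges different from $m$ that share the colour $c(m)$ number at most $b-1$, by the hypothesis that each colour class has size at most $b$. So each $m\in M$ blocks at most $2\Delta(G)+b-3$ edges outside $M$. Summing over $m\in M$ and using maximality,
\[
e(G)-|M|=|E(G)\setminus M|\le \sum_{m\in M}\bigl(2\Delta(G)+b-3\bigr)=|M|\bigl(2\Delta(G)+b-3\bigr),
\]
so $e(G)\le |M|(2\Delta(G)+b-2)$, which is stronger than the claimed inequality $|M|\ge e(G)/(2\Delta(G)+b)$.

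There is really no obstacle here; the argument is elementary and the stated bound has slack built in, so I do not need to squeeze the counts carefully. The only thing to be slightly careful about is not double-subtracting $m$ itself when combining the vertex-blocked and colour-blocked counts, but since edges incident to $u$ or $v$ and edges of colour $c(m)$ may overlap, using a union bound and the loose denominator $2\Delta(G)+b$ in the conclusion makes any such bookkeeping irrelevant.
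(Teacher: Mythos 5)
Your proof is correct and follows essentially the same approach as the paper's: both take a maximum/maximal rainbow matching and argue by counting that the edges incident to $V(M)$ or sharing a colour with $M$ cannot account for all of $E(G)$ unless $|M|$ is large. The paper phrases this as a contradiction (if $|M|$ were too small, the global counts $\Delta(G)|V(M)|$ and $b|M|$ would fall short of $e(G)$, leaving an edge that could be added); you instead sum per-edge blocking counts, which gives the marginally sharper constant $2\Delta(G)+b-2$ in the denominator, but the underlying argument is identical.
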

\begin{proof}
Let $M$ be a maximum rainbow matching. Suppose for the sake of contradiction that $|M|<{e(G)}/(2\Delta(G)+b)$ The number of edges touching $V(M)$ is  at most  $\Delta(G)|V(M)|< {2\Delta(G) e(G)}/(2\Delta(G)+b)$. The number of edges sharing a colour with an edge of $M$ is  at most  $ b e(M)< {be(G)}/(2\Delta(G)+b)$. Since $e(G)={2\Delta(G) e(G)}/(2\Delta(G)+b)+{b e(G)}/(2\Delta(G)+b)$, there is an edge $e\in G$ which is disjoint from $V(M)$ and whose colour is not present in $M$. Thus $M\cup\{e\}$ is a rainbow matching, contradicting the maximality of $M$.
\end{proof}
We remark that the above lemma implies that every properly coloured graph has a rainbow matching of size $\frac{e(G)}{3|G|}$.
The above lemma is used to prove the following lemma about finding several disjoint rainbow matchings in a graph.
\begin{lemma}\label{LemmaManyLargeMatchings}
Let $G$ be a properly coloured graph with $\delta(G)\geq \delta$ and  at most  $b$ edges of each colour, and let $t\leq (|G|- 72\delta- 6b)/29$.
 Then $G$ has $t$ edge-disjoint rainbow matchings $M_1, \dots, M_t$ of size $\delta$. 

In addition there is a set $A=\{r_1, \dots, r_t\}$ with $A\cap V(M_i)=\emptyset$  such that for every $xy\in M_i$ we have one of  $r_ix\not\in E(G)$, $r_iy\not\in E(G)$, $c(r_ix)\not\in M_i$, or  $c(r_i y)\not\in M_i$.
\end{lemma}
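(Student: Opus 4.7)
The plan is to construct the pairs $(M_i, r_i)$ iteratively over $i = 1, \ldots, t$. To enforce $A \cap V(M_j) = \emptyset$ for all $i, j$, I would first reserve a pool $R \subseteq V(G)$ of $t$ candidate roots at the outset; every matching will be built inside $G \setminus R$ and every $r_i$ will be taken from $R$, so $A \subseteq R$ is automatically disjoint from each $V(M_i)$. At step $i$, let $G_i = (G \setminus R) \setminus \bigcup_{j<i} E(M_j)$.

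Assuming a rainbow matching $M \subseteq G_i$ of size at least $2\delta$ is available (the hard step, discussed below), I pick any $r_i \in R \setminus \{r_1, \ldots, r_{i-1}\}$ and trim $M$ as follows. Call an edge $xy \in M$ \emph{bad} if $r_ix, r_iy \in E(G)$ and both $c(r_ix), c(r_iy) \in M$. Because $G$ is properly coloured, for each colour $c \in M$ there is at most one vertex $v$ with $r_iv \in E(G)$ and $c(r_iv) = c$; hence at most $|M|$ vertices of $V(G)$ can be \emph{affected} (incident to a colour-in-$M$ edge from $r_i$), and since each bad edge uses two affected vertices lying in $V(M)$, there are at most $|M|/2$ bad edges. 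Let $M_i$ be $M$ with the bad edges removed, trimmed down to exactly $\delta$ edges; then $|M_i| = \delta$. For any surviving $xy \in M_i$, non-badness in $M$ gives at least one of $r_ix \notin E(G)$, $r_iy \notin E(G)$, $c(r_ix) \notin M$, or $c(r_iy) \notin M$, and since $M_i \subseteq M$ the same conclusion holds with $M_i$ in place of $M$, establishing the required property.

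The main obstacle is producing the rainbow matching $M \subseteq G_i$ of size $\geq 2\delta$. A direct application of Lemma~\ref{LemmaLargeMatchingSmallMaximumDegree} to $G_i$ yields only a matching of size around $\delta/5$, because $\Delta(G_i)$ may be of order $|G|$. To get to size $2\delta$, I would first pass to a subgraph of $G_i$ of bounded maximum degree before invoking the lemma: since $G$ is properly coloured, restricting to the edges whose colour lies in a set $C'$ automatically caps the maximum degree at $|C'|$. Splitting the colour palette of $G_i$ into appropriately sized groups $C^1, \ldots, C^k$ and concatenating vertex-disjoint rainbow matchings extracted from each group (which remain rainbow, since the colour sets are disjoint) is the natural route to a matching of size $\geq 2\delta$. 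The explicit constants $72$ and $29$ in the hypothesis should arise from the cumulative accounting of this construction — keeping enough edges, enough colours, and enough uncovered vertices available through all $t$ stages, while absorbing the losses from removing $R$ and from previously built matchings.
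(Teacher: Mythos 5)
Your trimming argument --- that at most $|M|/2$ edges of a rainbow matching $M$ can be ``bad'' with respect to a fixed root $r_i$ --- is correct and close to what the paper does in its bounded-degree case. However, the step you acknowledge as ``the hard step'' is where the plan breaks down, and not just as a matter of constants: a rainbow matching of size $2\delta$ need not exist in $G$ at all. Take $G$ to be the join of a clique on $\delta$ ``hub'' vertices with an independent set on $n-\delta$ vertices, properly edge-coloured (every colour class is then a matching through the hubs, so $b\leq\delta$). Here $\delta(G)=\delta$ and the hypotheses of the lemma are met with $t>0$ (for instance with $\delta=n/100$ and $n$ large), but every edge of $G$ contains a hub, so \emph{every} matching in $G$ has size at most $\delta$. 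Your Step~1 --- producing a rainbow matching $M\subseteq G_i\subseteq G$ of size at least $2\delta$ --- is therefore impossible. Two secondary problems: reserving an arbitrary pool $R$ at the outset can destroy a large share of the edges of $G$ if $R$ happens to contain high-degree vertices (in the example above, if $R$ contains $j$ hubs then $G\setminus R$ has no matching of size exceeding $\delta-j$); and the colour-palette-splitting route to bounded maximum degree is left unspecified, with no control over the disjointness of the extracted sub-matchings or their total size.

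The paper sidesteps all of this by inducting on $\delta$ rather than building all $t$ matchings in one pass. If some vertex $v$ has degree at least $6\delta+2t$, it applies the inductive hypothesis to $G\setminus\{v\}$ (which has minimum degree at least $\delta-1$) to obtain $t$ disjoint rainbow matchings of size $\delta-1$ together with a valid root set, and then extends each matching by one carefully selected edge through $v$; this is exactly the regime in which a single-pass construction cannot get started, as in the example above. Otherwise $\Delta(G)\leq 6\delta+2t$, and then $A$ is taken to be the $t$ \emph{lowest-degree} vertices rather than an arbitrary pool, which ensures $e(G\setminus A)$ stays comparable to $e(G)$; Lemma~\ref{LemmaLargeMatchingSmallMaximumDegree} then yields rainbow matchings of size $3\delta$ directly (using the bound on $\Delta$), and these are trimmed to size $\delta$ by an argument in the spirit of your bad-edge count.
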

\begin{proof}
The proof is by induction on $\delta$. The initial case when $\delta=0$ which holds trivially. 
Let $\delta>0$, and suppose that the lemma holds for all $\delta'<\delta$.
Let $b,t,G$ be as in the statement of the lemma.

Suppose there is a vertex $v\in V(G)$ with $d(v)\geq 6\delta+2t$. Notice  that $\delta(G\setminus \{v\})\geq \delta-1$.
Therefore, by induction $G\setminus\{v\}$ has $t$ edge-disjoint rainbow $(\delta-1)$-matchings $M_1, \dots, M_t$, and a set $A=\{r_1, \dots, r_t\}$ satisfying the conditions of the lemma. 
For $i=1, \dots, t$,  notice that out of the edges containing $v$, there are at most $t$ edges  touching $A$, at most $2\delta-2$ edges touching $V(M_i)$, at most $\delta-1$ edges sharing a colour with an edge of $M_i$, at most $\delta-1$ edges $vy$ with $c(r_iy)\in M_i$, and  at most $2\delta-2$ edges $vy$ with $c(vy)= c(r_i u)$ for $u\in V(M_i)$.
Therefore since $d(v)\geq 6\delta+2t$, for each $i$ there are at least $t$ edges $vy_i$  for which none of these occur.
Equivalently, for each $i$, there are at least $t$ edges $vy_i$  disjoint from $A$,   with $M_i\cup\{vy_i\}$ a rainbow matching, $c(r_iy_i)\not\in M_i$, and  $c(vy_i)\neq c(r_i u)$ for $u\in V(M_i)$. By greedily choosing such edges $vy_1, \dots, vy_t$ one at a time, we can ensure that they are all distinct, and and hence obtain disjoint rainbow matchings $M_1\cup\{vy_1\}, \dots, M_t\cup\{vy_t\}$ of size $\delta$  satisfying the conditions of the lemma.

Suppose that $\Delta(G)\leq 6\delta+2t$. 
Let $A$ be a set of $t$ vertices whose degrees in $G$ are as small as possible. By the choice of $A$, there is a number $d=\max_{r_i\in A}d(r_i)$ such that $d(v)\geq d\geq d(r_i)$ for all $r_i \in A$ and $v\not\in A$.  Let $H = G[V(G)\setminus A]$ to get a graph with $e(H)\geq d|H|/2-dt$.
By Lemma~\ref{LemmaLargeMatchingSmallMaximumDegree}, any subgraph $H'$ of $H$ with $e(H')\geq e(H)-t\delta$ has a rainbow matching $M$ satisfying  
$$e(M)\geq \frac{e(H')}{2\Delta(H')+b}
\geq \frac{0.5d|H|-td-t\delta}{2\Delta(G)+b}
\geq \frac{d(0.5|G|-2.5t)}{12\delta+4t+b}
\geq 3d\geq 3\delta(G)\geq 3\delta.$$
Here the third inequality comes from $|H|=|G|-t$, $\delta\leq \delta(G)\leq d$ and $\Delta(G)\leq 6\delta+2t$ while the fourth inequality is equivalent to $t\leq (|G|- 72\delta- 6b)/29$. 
For any $i$, given a rainbow matching $M=\{x_1y_2, \dots, x_{3\delta}y_{3\delta}\}$ of size $3\delta$, we can choose a submatching $M'\subseteq M$ of size $\delta$ such that we have either ``$r_ix_i\not\in E(G)$'' or ``$c(r_ix_i)\not\in M'$'' for any $x_i$ (to do this, choose the edges of $M'$ one at a time, noting that there are always less than $2|M'|$ edges of $M$ which can't be chosen).
By repeatedly choosing such matchings $M_1, \dots, M_\delta$ one at a time, at each step  letting $H'$ be $H$ minus the edges of the previously selected matchings, we get $t$ disjoint matchings of size $\delta$ as required. 
\end{proof}

\subsection*{Step 1: Disjoint spiders}
The following lemma allows us to find many disjoint nearly-spanning spiders in a graph. It is used as a starting point to finding the spanning spiders in Theorem~\ref{TheoremProperColouring}. This lemma is step (1) of the proof sketch in Section~\ref{SectionProofSketch}.
\begin{lemma}\label{LemmaCoverBipartiteGraph}
Let  $(1-2\delta)b\geq 8a$. Suppose that $K_{a,b}$ is properly coloured with bipartition classes $A$ and $B$ with $A=\{r_1, \dots, r_a\}$ and $|B|=b$.
Let $F_1, \dots, F_a$ be sets of colours with $|F_i|\leq \delta b$.

Then $K_{a,b}$ has edge-disjoint, rainbow $\left(a-1\right)$-spiders $S_1, \dots, S_{a}$, with $S_i$ rooted at $r_i$, $S_i$ having no colours from $F_i$, $|S_i|\geq (1-\delta)b-a+1$, and $V(S_i)\supset A$.
\end{lemma}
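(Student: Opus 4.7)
The plan is to construct each spider $S_i$ as the union of a large rainbow star $S_i^\star$ rooted at $r_i$ with leaves in $B$ together with a rainbow matching $M_i = \{r_j v_{ij} : j \neq i\}$ of size $a-1$ saturating $A \setminus \{r_i\}$. Observation~\ref{ObservationStarPlusMatching} then guarantees that $S_i^\star \cup M_i$ is an $(a-1)$-spider rooted at $r_i$ with middle vertices $v_{ij}$ and leaves $r_j$, provided each $v_{ij}$ lies in the leaf set of $S_i^\star$. The task thus reduces to choosing an ``$a \times a$ matrix of middle vertices'' $(v_{ij})_{i \neq j}$ in $B$, together with leaf sets $T_i \subseteq B$ for the stars, subject to appropriate colour and disjointness constraints.

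For each $i$ set $B_i = \{v \in B : c(r_i v) \notin F_i\}$, so $|B_i| \geq (1-\delta)b$, and require $v_{ij} \in B_i$ throughout (this guarantees that the star edges at $r_i$ avoid $F_i$). The further conditions I impose on $(v_{ij})$ are: (i) for each $i$ the row $\{v_{ij} : j \neq i\}$ has $a - 1$ distinct entries, so each $M_i$ is a matching; (ii) for each $j$ the column $\{v_{ij}: i \neq j\}$ has $a-1$ distinct entries, so the $M_i$'s are pairwise edge-disjoint; (iii) for each $j$, row $j$ and column $j$ are disjoint --- this is what ensures that the star $S_j^\star$ (whose edges at $r_j$ go to row-$j$ vertices) is edge-disjoint from the pendant edges $r_j v_{ij}$ of the other spiders; and (iv) for each $i$ the pendant colours $c(r_j v_{ij})$ for $j \neq i$ are pairwise distinct and lie outside $F_i$.

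Step 1 is to choose the $v_{ij}$ greedily in lexicographic order of the pairs $(i,j)$. When picking a new $v_{ij}$, the number of vertices of $B$ ruled out by the previously chosen entries is at most $5(a-2)$ (a constant number of row/column/row-$j$-vs-column-$j$/in-matching-rainbow conditions, each forbidding at most $a-2$ or one vertex), plus at most $2|F_i| \leq 2\delta b$ further vertices from requiring $c(r_i v_{ij}), c(r_j v_{ij}) \notin F_i$. Since $(1-2\delta)b \geq 8a$ gives $b > 5a + 2\delta b$, a valid choice of $v_{ij}$ always exists. Step 2 is to set
\[
T_i \;=\; B_i \setminus \{v_{ki} : k \neq i\} \setminus \{v \in B : c(r_i v) = c(r_j v_{ij}) \text{ for some } j \neq i\},
\]
which removes at most $2(a-1)$ vertices from $B_i$, so $|T_i| \geq (1-\delta)b - 2(a-1)$. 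Condition (iii) ensures $\{v_{ij}: j \neq i\} \subseteq T_i$, so the star $S_i^\star$ with leaf set $T_i$ contains all the middle vertices. Taking $S_i = S_i^\star \cup M_i$ and applying Observation~\ref{ObservationStarPlusMatching} gives an $(a-1)$-spider with $|S_i| = 1 + |T_i| + (a-1) \geq (1-\delta)b - a + 2$. The remaining routine checks --- that $S_i$ is rainbow, avoids $F_i$, contains $A$, and that the $S_i$ are pairwise edge-disjoint (using (ii) for matching--matching, (iii) for matching--star, and bipartiteness for star--star) --- all follow directly from the construction.

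The main obstacle is the interlocking nature of the constraints: the choice of $v_{ij}$ must respect structural conditions for both $S_i$ (where $v_{ij}$ serves as a middle vertex) and $S_j$ (whose star must avoid the edge $r_j v_{ij}$). The hypothesis $(1-2\delta)b \geq 8a$ is what gives enough slack for the greedy argument to absorb both the $O(a)$ structural exclusions and the $O(\delta b)$ colour-avoidance exclusions at each step.
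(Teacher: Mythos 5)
Your construction is essentially the paper's: the matrix $(v_{ij})$ plays exactly the role of the paper's $(b_{i,j})$, the decomposition of $S_i$ into a large star at $r_i$ together with the matching $\{r_j v_{ij}: j\neq i\}$ is the same, and the greedy choice under the slack $(1-2\delta)b\geq 8a$ is the same idea. However, there is a genuine gap in the step where you put the middle vertices into the leaf set of the star. You assert that ``Condition (iii) ensures $\{v_{ij}: j\neq i\}\subseteq T_i$'', but (iii) only prevents the \emph{first} exclusion in your definition of $T_i$ (the column-$i$ entries) from deleting a $v_{ij}$; it says nothing about the \emph{colour} exclusion $\{v\in B: c(r_i v)=c(r_j v_{ij})\text{ for some }j\neq i\}$. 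Nothing in (i)--(iv) forbids $c(r_i v_{il})=c(r_j v_{ij})$ for distinct $j,l\neq i$ --- i.e.\ a root-to-joint colour of $S_i$ coinciding with a joint-to-foot colour of $S_i$. If that happens, $v_{il}$ is deleted from $T_i$, the edge $r_l v_{il}\in M_i$ no longer meets $S_i^\star$, Observation~\ref{ObservationStarPlusMatching} does not apply, and $S_i^\star\cup M_i$ is not even connected.

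The fix is to add one more colour condition to the greedy selection: for each $i$ and all distinct $j,l\neq i$, also require $c(r_i v_{ij})\neq c(r_l v_{il})$. When choosing $v_{ij}$ this forbids at most one further vertex per previously chosen entry in row $i$ (via $c(r_i v)=c(r_l v_{il})$) and at most one further vertex per previously chosen $v_{ij'}$ when later picking $v_{il}$ (via $c(r_l v)=c(r_i v_{ij'})$), so only $O(a)$ extra exclusions per step, easily absorbed by $(1-2\delta)b\geq 8a$. This is in effect what the paper does: it imposes the colour conditions $c(r_i b_{i,j})\neq c(r_k b_{k,l})$ and $c(r_j b_{i,j})\neq c(r_k b_{k,l})$ for every $(k,l)$ with $\{i,j\}\cap\{k,l\}\neq\emptyset$, which in particular (taking $(k,l)=(i,l)$) gives $c(r_j b_{i,j})\neq c(r_i b_{i,l})$, exactly the missing inequality. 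The paper also structures the spider so the joint edges $r_i b_{i,j}$ are kept separate from the ordinary-leaf star $S_i^3$ from the outset, which avoids your issue of the middle vertices having to ``survive'' a deletion step.
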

\begin{proof}
For every $i\neq j$ with $1\leq i,j\leq {a}$ we choose a vertex $b_{i,j}\in B$ such that $c(r_ib_{i,j}), c(r_j, b_{i,j})\not\in F_i$. 
Since there are always 
${b}-2|F_i|\geq (1-2\delta){b}\geq 8{a}$ choices for such a vertex,  we can ensure that for any $i,j,k,l$ with $\{i,j\}\cap \{k,l\}\neq \emptyset$ we have $c(r_ib_{i,j})\neq c(r_kb_{k,l})$ and $c(r_jb_{i,j})\neq c(r_kb_{k,l})$ (to see this, notice that for fixed $i,j$ there are less than $4{a}$ ordered pairs $(k,l)$ with $\{i,j\}\cap \{k,l\}\neq \emptyset$. Since there are  at least $ 8{a}$ choices for $b_{i,j}$ we can choose it so that $c(r_ib_{i,j}), c(r_jb_{i,j})$ are distinct from $c(r_kb_{k,l})$ for all $(k,l)$ with $\{i,j\}\cap \{k,l\}\neq \emptyset$).
Notice that since $K_{a,b}$ is properly coloured, this ensures that for distinct $i, j, k$, the vertices $b_{i,j}, b_{i,k}, b_{j,i}$, and $b_{k,j}$ are all distinct.

Let  $S_i^1=\{r_{i}b_{i,j}: j\neq i\}$, $S_i^2=\{r_jb_{i,j}: j\neq i\}$, and $S_i^3=\{r_i b: c(r_ib)\not\in F_i, \text{ $b \neq b_{i,j}$}$ and $\text{$b\neq b_{j,i}$ for any $j$}\}$ to get graphs with $e(S_i^1)=e(S_i^2)={a}-1$ and $e(S_i^3)\geq (1-\delta){b}-2{a}+2$. 
Notice that $S_i^1\cup S_1^3$ are rainbow since $K_{a,b}$ is properly coloured and $S^2_i$ is rainbow since $c(r_jb_{i,j})\neq c(r_kb_{i,k})$ for distinct $i,j,k$.
Notice that $S^1_i\cup S^2_i$ is rainbow  since $c(r_ib_{i,j})\neq c(r_kb_{i,k})$ for distinct $i,j,k$.
Since $S_i^3$ is rainbow and $|S_i^3|\geq (1-\delta){b}-2{a}+2\geq {a}-1$, we can delete some set of $|S^2_i|={a}-1$ edges from $S_i^3$ to get a set $\hat S_i^3$ such that $\hat S_i^3\cup S_i^2$ is rainbow. 

For each $i=1, \dots, {a}$, let $S_i=S^1_i\cup S^2_i\cup \hat S^3_i$ to get a rainbow $({a}-1)$-spider of size  at least $(1-\delta){b}-{a}+1$. Notice that $S_i$ covers $A$ since $S_i^2$ covers $A\setminus\{r_i\}$. For distinct $i$ and $j$, $S_i$ and $S_j$ are edge-disjoint since for distinct $i, j, k$, the vertices $b_{i,j}, b_{i,k}, b_{j,i}$, and $b_{k,j}$ are all distinct.
\end{proof}

\subsection*{Step 2: Spanning spiders}
The above lemma finds many disjoint nearly-spanning spiders in a graph. In order to prove Theorem~\ref{TheoremProperColouring}, we need to turn these into truly spanning spiders i.e. we need to perform step (2) of the proof sketch from Section~\ref{SectionProofSketch}. 

The following lemma is used to do this---it says that under certain conditions, a rainbow star can be extended to a rainbow spider covering one extra vertex.
\begin{lemma}\label{LemmaAddOneVertexToStar}
Let   $\delta$ and $\mu$ be in $(0,1)$ with $2\mu|G|> 2\delta|G|+5$ and $1-\delta>4\mu$.
Let $G$ be a properly coloured graph with $\delta(G)\geq (1-\delta)|G|$,
 $S$ a star in $G$ rooted at $r$ with $|S|=|G|-1$,  and $M$ a matching in $G$ with $\mu |G|$ edges sharing no colours with $S$. 
Then $G$ has a spanning rainbow $(\leq 3)$-spider $D$ rooted at~$r$.
\end{lemma}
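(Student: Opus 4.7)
Let $v$ denote the unique vertex in $V(G)\setminus V(S)$; the plan is to extend $S$ to a spanning rainbow $(\leq 3)$-spider rooted at $r$ by a short case analysis on the attachment of $v$.

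First I would dispose of the easy cases. If $rv\in E(G)$, proper colouring forces $c(rv)\notin S$ (since the $|G|-2$ edges of $S$ at $r$ already exhaust the $|G|-2$ colours at $r$ that sit in $S$), and $D=S+rv$ is a spanning rainbow $0$-spider. Otherwise, if some $u\in N(v)\setminus\{r\}$ satisfies $c(uv)\notin S$, then $u$ is automatically a leaf of $S$ and $D=S+uv$ is a rainbow $1$-spider with subdivision $u$ and pendant $v$.

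The substantive case is $rv\notin E(G)$ together with $c(uv)\in S$ for every $u\in N(v)$. Here I would introduce, for each $u\in N(v)$, the unique leaf $f(u)\in V(S)\setminus\{r\}$ with $c(rf(u))=c(uv)$; this map is injective, and $v\notin V(M)$ since all edges at $v$ are $S$-coloured. By pigeonhole in $V(S)\setminus\{r\}$ combining $|N(v)|\geq(1-\delta)|G|$ with $|V(M)|\geq 2\mu|G|$, at least $(2\mu-\delta)|G|+1$ neighbours $u$ will satisfy $f(u)\in V(M)$; the inequality $2\mu|G|>2\delta|G|+5$ makes this positive. If some such $u$ has $z:=M(f(u))\neq u$ (the tame sub-case), then $z\neq r$ (since $c(f(u)z)\notin S$) and $z\neq v$ (since $v\notin V(M)$), and I would set
\[D=S-rf(u)+vu+f(u)z,\]
which is a rainbow $2$-spider: $c(vu)=c(rf(u))$ exactly restores the freed colour, $c(f(u)z)\notin S$ adds a new one, and the legs are $r{-}u{-}v$ and $r{-}z{-}f(u)$.

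The main obstacle is the remaining sub-case, in which every candidate $u$ is \emph{bad} in the sense that $\{u,f(u)\}\in M$. My plan for this is to exploit two distinct bad $M$-edges $\{u_1,f(u_1)\},\{u_2,f(u_2)\}$ and choose $y\in V(S)\setminus\{r,u_1,u_2,f(u_1),f(u_2),v\}$ with $c(f(u_1)y)=c(rf(u_2))$, and then take
\[D=S-rf(u_1)-rf(u_2)+vu_1+u_2f(u_2)+f(u_1)y.\]
A routine verification shows $D$ has $|G|-1$ edges and spans $V(G)$: the removed colours $c(rf(u_i))$ are recaptured by $c(vu_1)$ and $c(f(u_1)y)$, while $c(u_2f(u_2))\notin S$ is new; structurally $D$ is a $3$-spider with subdivisions $u_1,f(u_2),y$ and pendants $v,u_2,f(u_1)$. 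The hard part is the existence of an admissible $y$: the set $\{c(rf(u_2)):u_2\text{ bad}\}$ has size $\geq(2\mu-\delta)|G|$ and sits inside $S$, while $f(u_1)$ carries at least $(1-\delta)|G|$ distinct colours on its incident edges, so a pigeonhole produces many viable colour matches; the slack in $2\mu|G|>2\delta|G|+5$ and $1-\delta>4\mu$ then absorbs the $O(1)$ forbidden coincidences for $y$ (such as $y=v$, $y=u_2$, or $y=f(u_2)$), and this is exactly where those hypotheses get used.
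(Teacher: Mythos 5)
Your proposal follows the same overall strategy as the paper and arrives at essentially the same three-edge-swap construction for the $3$-spider (your $u_1, u_2, y$ correspond to the paper's $y, y', z$, with $f(u_1)=f(v,y)$ and $f(u_2)=f(v,y')$). The two easy cases and the ``tame'' sub-case (which is the paper's ``$zf(v,y)\in M$ with $z\neq y$'' case) match. The distinctness issues in the final construction are also fine: since $\{u_1,f(u_1)\}$ and $\{u_2,f(u_2)\}$ are distinct edges of $M$, one cannot have $u_1=f(u_2)$ or $u_2=f(u_1)$.

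The genuine gap is in the pigeonhole that is supposed to produce $y$. You want a bad $u_2$ such that the colour $c(rf(u_2))$ appears on some edge at $f(u_1)$. Your argument compares the $\geq(2\mu-\delta)|G|$ bad colours (all in $S$) with the $\geq(1-\delta)|G|$ colours incident to $f(u_1)$, but nothing forces the latter to lie inside $S$: the graph may have arbitrarily many colours, and $f(u_1)$ could have nearly all of its incident edges coloured outside $S$. In that situation the two sets need not intersect at all, and no admissible $y$ exists. The paper closes exactly this hole with an additional intermediate case before the final pigeonhole: if some $f(v,y)$ has a neighbour $z\notin\{y,v,r,f(v,y)\}$ with $c(zf(v,y))\notin S$, then $S-rf(v,y)+vy+zf(v,y)$ is already a rainbow $2$-spider and one is done; hence in the remaining case $|N_S(f(v,y))|\geq(1-\delta)|G|-2$ for every $y\in N(v)$, and only with this bound in hand does the pigeonhole on colours inside $S$ become valid. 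You would need to add this case (or an equivalent bound on the number of non-$S$-coloured edges at $f(u_1)$) to complete the proof. A minor difference from the paper: you take $u_1$ to be bad, while the paper instead chooses the analogous vertex with $y,f(v,y)\notin V(M)$; either choice can be made to work, but the paper's makes several of the disjointness checks automatic.
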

\begin{figure}
  \centering
     \includegraphics[width=1\textwidth]{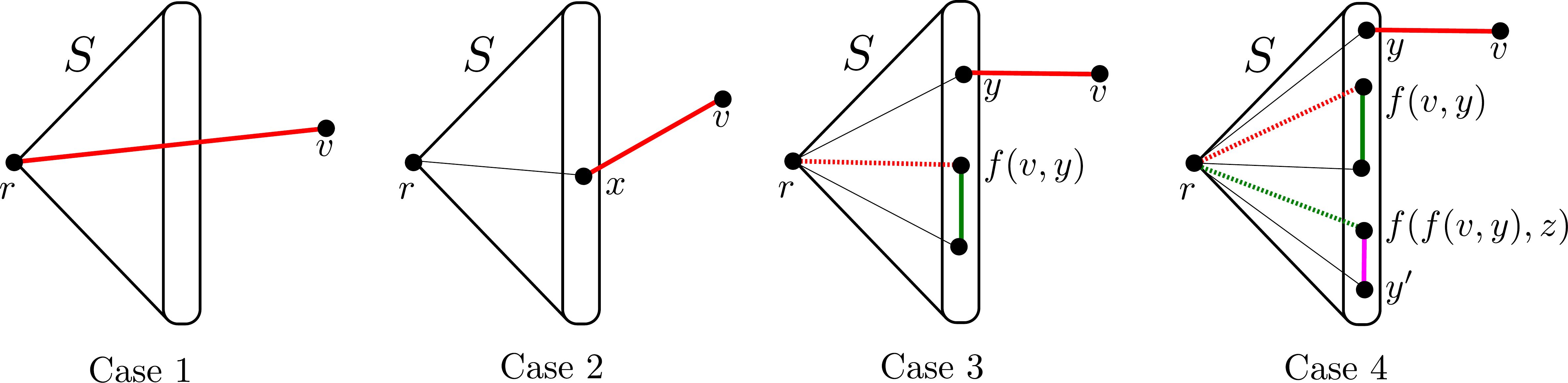}
  \caption{The different cases of the proof of Lemma~\ref{LemmaAddOneVertexToStar}. Each of the figures represent a different way of modifying the star $S$ to produce a spider  $D$ containing $v$. Dashed edges represent edges which get deleted from $S$ to get $D$, while dashed edges represent edges which get added to $S$ to get $D$.  \label{FigureAddOneVertexToStar}}
\end{figure}
\begin{proof}
Let $v$ be the vertex in  $V(G)\setminus V(S)$.
If $rv$ is an edge then $S+rv$ is a rainbow $0$-spider satisfying the conclusion of the lemma (see Case 1 in Figure~\ref{FigureAddOneVertexToStar}). 
If $c(vx)\not\in S$ for any vertex $x\in N(v)\setminus\{r\}$, then $S+vx$ is a rainbow $1$-spider satisfying the conclusion of the lemma (see Case 2 in Figure~\ref{FigureAddOneVertexToStar}). 
Therefore, we can assume that $c(vx)\in S$ for every $x\in N(v)$. In particular we have that $v, r\not\in V(M)$.

For a vertex $x$ let $N_S(x)=\{y\in N(x):  c(xy)\in S\}$. From the previous paragraph, we have $N_S(v)=N(v)$.
For $x\in V(G)$ and  $y\in N_S(x)$, let $f(x,y)$ be the vertex  $s\in S$ with $c(rs)=c(xy)$. 
Since $G$ is properly coloured, for fixed $x$ the function $f(x,y)$ is an injection from  $N_S(x)$ to $V(S)$. Notice that since $G$ is properly coloured and $c(r f(x,y))=c(xy)$, we  have 
\begin{equation}\label{Eq_y_and_x_distinct_from_fxy}
y\neq f(x,y) \text{ and } x\neq f(x,y) \text{ for any } x,y\neq r.
\end{equation}

Suppose that there is some  $y\in N_S(v)=N(v)$ and $z\neq y$ with $zf(v,y)\in M$. We claim that the edges $vy$ and $zf(v,y)$ are disjoint. Indeed $y\neq z$ by assumption, $y\neq f(v,y)$ by (\ref{Eq_y_and_x_distinct_from_fxy}), and $v\cap \{z,f(v,y)\} =\emptyset$ since $v\not\in V(M)$.
Using Observation~\ref{ObservationStarPlusMatching} we have that $D=S-rf(v,y)+vy+zf(v,y)$ is a rainbow $2$-spider satisfying the conclusion of the lemma (see Case 3 in Figure~\ref{FigureAddOneVertexToStar}). 
Therefore, for the rest of the proof we can assume the following.
\begin{equation}\label{Eq_yfxy_in_M}
\text{For $y\in N(v)$ with $f(v,y)\in V(M)$ we have $yf(v,y)\in M$.}
\end{equation} 

Suppose that there is  $y\in N_S(v)$ and $z\in N(f(v,y))$ with $z\not\in\{y,v,r,f(v,y)\}$ and $c(zf(v,y))\not\in S$. Notice that $y,$ $v,$ $r,$ $f(v,y)$ are all distinct.
Using Observation~\ref{ObservationStarPlusMatching}, $S-rf(v,y)+vy+zf(v,y)$ is a rainbow $2$-spider satisfying the conclusion of the lemma (see Case 3 in Figure~\ref{FigureAddOneVertexToStar}).  Therefore we can assume that for all $y\in N_S(v)=N(v)$ we have $N(f(v,y))\setminus N_S(f(v,y))\subseteq \{y,v,r,f(v,y)\}$. 
Together with $f(v,y) \not\in  N(f(v,y))$, $r\in N_S(f(v,y))$, and  $\delta(G)\geq (1-\delta)|G|$  this implies 
\begin{equation}\label{Eq_NSfvy_bound}
|N_S(f(v,y))|\geq (1-\delta)|G|-2.
\end{equation}

Since $|N(v)|\geq (1-\delta)|G|>4\mu |G|\geq 2|V(M)|$ and $f(x,y)$ is an injection for fixed $x$, there is some $y\in N(v)=N_S(v)$ with $y\not\in V(M)$ and $f(v,y)\not\in V(M)$. Let $T=\{f(v,y'):y'\in N(v)\}$ and notice that $|T|=|N(v)|\geq(1-\delta)|G|$ and so $|T\cap V(M)|\geq (2\mu-\delta)|G|$.
Using (\ref{Eq_NSfvy_bound}) and ``$2\mu|G|> 2\delta|G|+5$'' we have 
$$|N_S(f(v,y))|\geq (1-\delta)|G|-2> (1-2\mu+\delta)|G|+3\geq |V(G)\setminus (T\cap V(M))|+|\{v,r,y\}|.$$
Using the fact that $f(x,y)$ is an injection for fixed $x$,  there is some $z\in N_S(f(v,y))\setminus\{v,r,y\}$ with $f(f(v,y),z)\in V(M)\cap T$. 
Since $f(f(v,y),z)\in T$, there is some $y'\in N(v)$ with $f(f(v,y),z)=f(v,y')$. Since  $f(f(v,y),z)=f(v,y')\in V(M)$ from (\ref{Eq_yfxy_in_M}) we get that $f(f(v,y),z)y'\in M$. By the definition of $f(*,*)$, we have $c(f(f(v,y),z)r)=c(f(v,y)z)$ and $c(f(v,y')r)=c(vy')$. Since $f(f(v,y),z)=f(v,y')$, we get $c(vy')=c(f(v,y)z)$. Since $G$ is properly coloured, we get that $z\neq y'$ and $f(v,y)\neq y'$.

Notice that the edges $vy$, $f(v,y)z$, and $f(f(v,y),z)y'$ are disjoint. 
Indeed we have and $z\neq y'$ and $f(v,y)\neq y'$ from the previous paragraph. 
We have $v\not\in\{ f(v,y),z,f(f(v,y),z),y'\}$ since $v\not\in  V(S)\cup V(M)$ and by choice of $z$. 
We have   $y\neq f(v,y),z$ by (\ref{Eq_y_and_x_distinct_from_fxy}) and choice of $z$.
We have $y\neq f(f(v,y),z),y'$ since $y\not\in V(M)$. 
Finally, we have  $f(f(v,y),z)\neq f(v,y), z$ by (\ref{Eq_y_and_x_distinct_from_fxy}). 

Notice that the edges $vy$, $f(v,y)z$, and $f(f(v,y),z)y'$ have different colours. Indeed we have $c(vy)\neq c(f(v,y)z)$ and  $c(f(v,y)z)\neq c(f(f(v,y),z)y')$ since $c(ab)=c(r f(a,b))$ for any edge $ab$ and $G$ is properly coloured. We have  $c(vy)\neq c(f(f(v,y),z)y')$ since $f(f(v,y),z)y'\in M$ and $c(vy)\in S$.

Now we have that the edges $vy$, $f(v,y)z$, and $f(f(v,y),z)y'$ are disjoint and have different colours. Using $c(vy)=c(f(v,y)r)$ and $c(f(v,y)z)=c(f(f(v,y),z)r)$ and Observation~\ref{ObservationStarPlusMatching} we have that $S-f(v,y)r-f(f(v,y),z)r+vy+f(v,y)z+f(f(v,y),z)y'$ is a rainbow $3$-spider (see Case 4 in Figure~\ref{FigureAddOneVertexToStar}). 
\end{proof}

By iterating the above lemma, we can show that under certain conditions, if we have a sufficiently large star, then we also have a spanning spider.
\begin{lemma}\label{LemmaAddManyVerticesToStar}
Let $\epsilon, \phi, \delta, \tau>0$.
 Let $G$ be a sufficently large properly coloured graph and set  $t=\tau|G|$. 
Suppose that $\delta(G)\geq (1-\delta)|G|+2t$ and $S$ is a  star centered at $r\in V(G)$ with $|S|=|G|-t$.  Suppose that either of the following hold.
\begin{enumerate}[(i)]
\item There are   at least  $t$ colours outside $S$, each with   at least   at least $ \epsilon |G|+t$ edges and $\epsilon\geq \delta+ 19\tau$.
\item There are   at most  $(1-\phi)(|G|-t)$ colours in $G$, each with   at least  $ \epsilon(|G|-t)$ edges and $0.1\geq \epsilon\geq \phi\geq 13\delta+200\tau$.
\end{enumerate}
Then $G$ has a spanning rainbow $(\leq 3t)$-spider centered at $r$.
\end{lemma}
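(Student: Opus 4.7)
The plan is to absorb the $t$ uncovered vertices of $V(G)\setminus V(S)$ into the spider one at a time, by iteratively applying Lemma~\ref{LemmaAddOneVertexToStar}. Setting $D_0:=S$ (a rainbow $0$-spider of order $|G|-t$), I will inductively construct rainbow spiders $D_1,\ldots,D_t$ rooted at $r$ with $|V(D_i)|=|G|-t+i$ and spider parameter $s_i\leq 3i$; then $D_t$ is the desired spanning rainbow $(\leq 3t)$-spider.

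For the extension step $D_i\to D_{i+1}$, fix any $v\in V(G)\setminus V(D_i)$, decompose $D_i=S_i\cup\hat D_i$ (where $S_i$ is the star of $r$'s edges to the ordinary leaves of $D_i$ and $\hat D_i$ collects the $2s_i$ middle-plus-leg edges), and form the auxiliary graph
\[
G_v\;:=\;G\bigl[V(S_i)\cup\{v\}\bigr]\setminus\{e\in E(G): c(e)\in\hat D_i\}.
\]
In $G_v$, the star $S_i$ covers all of $V(G_v)$ except $v$, so $|V(S_i)|=|G_v|-1$ as required by Lemma~\ref{LemmaAddOneVertexToStar}. An appropriate matching $M$ (of edges in $G_v$ whose colours lie outside $S_i$) will then allow us to invoke that lemma and obtain a rainbow $(\leq 3)$-spider $\tilde S_i$ that spans $G_v$ and is rooted at $r$. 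By Observation~\ref{ObservationTwoDoubleStars}, and since colours of $\tilde S_i$ are automatically disjoint from those of $\hat D_i$ by the construction of $G_v$, the union $D_{i+1}:=\tilde S_i\cup\hat D_i$ is a rainbow spider rooted at $r$ with spider parameter $\leq s_i+3$.

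The minimum-degree hypothesis of Lemma~\ref{LemmaAddOneVertexToStar} holds because each vertex of $G_v$ loses at most $|V(G)\setminus V(G_v)|\leq t-i+2s_i\leq 7t$ neighbours from the vertex restriction and at most $|\hat D_i|=2s_i\leq 6t$ further neighbours from the colour deletion, so $\delta(G_v)\geq(1-\delta)|G|+2t-13t\geq(1-\delta-11\tau)|G_v|$. For the matching, the crucial accounting is that each iteration introduces at most one colour outside the original $S$ into the spider: cases 2--4 of the proof of Lemma~\ref{LemmaAddOneVertexToStar} each use exactly one matching edge (whose colour lies outside $S$), while case 1 only adds a star edge $rv$ whose colour is outside $S$ as well; moreover, cases 3 and 4 merely relocate existing $S$-colours between the star and the legs, so that all original $S$-colours are preserved in $D_i$ throughout. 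Consequently, after $i$ iterations, $D_i$ contains at most $i$ colours outside the original $S$.

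In case~(i), this yields at least $t-i\geq 1$ of the reserved extra colours that are completely untouched by $D_i$; taking any such colour $c$, the edges of colour $c$ in $G$ form a matching of size $\geq\epsilon|G|+t$, which restricts to a matching of size $\geq\epsilon|G|-6t\geq(\epsilon-6\tau)|G_v|$ in $G_v$. The assumption $\epsilon\geq\delta+19\tau$ ensures this exceeds the lower threshold $\delta'+5/(2|G_v|)$ of Lemma~\ref{LemmaAddOneVertexToStar}, and if it exceeds the upper threshold $(1-\delta')/4$ one passes to a sub-matching of appropriate size (possible since $\delta'<1/5$ follows from the constants). In case~(ii), the matching must be built differently: since $G$ has at most $(1-\phi)(|G|-t)$ colours of edge-count $\geq\epsilon(|G|-t)$ while each vertex sees $\geq(1-\delta)|G|+2t$ distinct colours, $G$ uses many distinct colours of which most must be ``small'' (appearing fewer than $\epsilon(|G|-t)$ times), so by applying Lemma~\ref{LemmaLargeMatchingSmallMaximumDegree} to the subgraph of $G_v$ consisting of edges whose colours lie outside $S_i$, one extracts a matching of size $\Omega(\phi|G|)$, sufficient because $\phi\geq 13\delta+200\tau$. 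I expect case~(ii) to be the main obstacle, since one must carefully track how the pool of edges with colours outside $S_i$ shrinks over the $t$ iterations --- both through the $\leq i$ colours that migrate into $S_i$ and through the loss of the few accessible non-$S_i$ colours --- and then show that the required matching persists, which is why the specific constants $\phi\geq 13\delta+200\tau$ and $\epsilon\geq\phi$ appear in the hypothesis.
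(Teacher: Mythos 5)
Your proposal is correct and takes essentially the same approach as the paper: grow the spider one vertex at a time by applying Lemma~\ref{LemmaAddOneVertexToStar} to the auxiliary graph $G_v$ obtained by deleting the leg colours of the current spider. The paper phrases this as a top-down induction on $t$, deleting both $v$ and the max-edge colour $c^+$ before recursing (so the recursively produced spider automatically avoids $c^+$, and $c^+$ then supplies the matching for Lemma~\ref{LemmaAddOneVertexToStar}), whereas you iterate bottom-up and explicitly track that at most $i$ colours outside the original $S$ have entered $D_i$, leaving a reserved colour free for the matching -- these are equivalent mechanisms; your case~(ii) is left as a sketch, but the double-counting idea and the role of $\phi\geq 13\delta+200\tau$ you invoke are exactly what the paper carries out in full.
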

\begin{proof}
The proof is by induction on $t$. The initial case ``$t=0$'' is trivial since $S$ is a $(\leq 0)$-spider centered at $r$. Suppose that $t\geq 1$ and the lemma holds for all $t'<t$.
Let $v$ be a vertex  not in $S$. Let $c^+$ be a colour outside $S$ with a maximum number of edges. 
Let $H$ be the subgraph of $G$ on $V(G)\setminus\{v\}$ with colour $c^+$ edges deleted. 

We show that the assumptions of the lemma hold for the graph $H$ and star $S$ with $t'=t-1$.
We have $\delta(H)\geq \delta(G)-2\geq (1-\delta)|G|+2t-2$, and $S$ is a  star in $H$ with $|S|=|H|-t+1$. 
If (i) held for $G$, then $H$ has  at least  $ t-1$ colours outside $S$, each with   at least $ \epsilon |G|+t-1\geq \epsilon|H|+t-1$ edges.
If (ii) held for $G$ then $H$ has  at most  $ (1-\phi)(|G|-t)=(1-\phi)(|H|-t+1)$ colours with  at least $ \epsilon(|G|-t)=\epsilon(|H|-t+1)$ edges. 

By induction $H$ has a spanning $(\leq 3t-3)$-spider $D$ rooted at $r$. 
Let $D=S'\cup D'$ where $S'$ is a star with $|S'|\geq |D'|-6(t-1)$ consisting of the ordinary leaves of $D$, and $D'$ is a $(\leq 3t-3)$-spider with $|D'|\leq 6(t-1)+1$.

Let $G'$ be the subgraph of $G$ on $V(S')\cup \{v\}$ consisting of all colours not in $D'$, $\delta'=\delta+12\tau$ and $\mu=\delta+13\tau$.
We show that the requirements of Lemma~\ref{LemmaAddOneVertexToStar} hold for $G'$, $\delta'$ and $\mu$.
Since   at most  $6t$ colours and  at most  $6t$ vertices are missing from $G'$, we have $\delta(G')\geq \delta(G)-12t\geq (1-\delta-12\tau)|G|=  (1-\delta')|G|\geq (1-\delta')|G'|$.
We have that $S'$ is a  star in $G'$ rooted at $r$ with  $V(G')=V(S')\dot\cup\{v\}$. 

If (i) holds for $G$, then since $c^+\not\in D'$ the colour $c^+$ edges in $G'$ form a matching $M$ of size  at least $ \epsilon |G|-6t\geq \mu|G'|$ disjoint from $S'$.

If (ii) holds for $G$, then notice that the number of edges in $G'$ of colours on $S'$ is at most 
\begin{align*}
(1-\phi)(|G|-t)\frac{|G|}{2}+ (|S'|- (1-\phi)(|G|-t))(\epsilon|G|-t)
&\leq (1-\phi)(1-\tau)\frac{|G|^2}2\\
&\hspace{2cm}+ (\phi+\tau(1-\phi))(\epsilon-\tau)|G|^2\\
&\leq \left(\frac12 -\frac{\phi}2+ \phi\epsilon +\tau \right)|G|^2\\
&\leq \frac{|G'|^2}2- \left(\frac25\phi -14\tau \right)|G|^2.
\end{align*}
On the LHS, the ``$(1-\phi)(|G|-t)\frac{|G|}{2}$'' term comes from the  colours in $G$ with   at least $ \epsilon(|G|-t)$ edges (of which there are at most $(1-\phi)(|G|-t)$), and the ``$(|S'|- (1-\phi)(|G|-t))(\epsilon|G|-t)$'' term comes from the other colours in $S$ having less than $(\epsilon|G|-t)$ edges. 
The first inequality comes from $|S'|\leq |G|$, $t=\tau|G|$, and $\epsilon>\tau$. The second inequality comes from $\tau\leq\phi\leq\epsilon\leq0.1$ and rearranging. The third inequality comes from $|G'|\geq |G|-6t$ and $\epsilon\leq0.1$. 
Thus the number of  edges in $G'$ of colours outside $S'$ is at least 
\begin{align*}
e(G')-\frac{|G'|^2}2+ \left(\frac25\phi -14\tau\right)|G|^2
&\geq (1-\delta')\frac{|G'|^2}2-\frac{|G'|^2}2+ \left(\frac25\phi -14\tau\right)|G|^2\\
&\geq \left(\frac25\phi -14\tau-2\delta'\right)|G|^2.
\end{align*}
 Lemma~\ref{LemmaLargeMatchingSmallMaximumDegree}  and ``$\epsilon\geq \phi\geq 13\delta+200\tau$'' give a rainbow matching $M$ of size $(2\phi/5 -14\tau-2\delta')|G|^2/3|G|\geq \mu|G|$ using only colours outside $S'$.
 
In either of the above cases, we obtained a matching of size  at least $ \mu|G|$ in $G'$ consisting of colours outside $S'$.
Since $2\mu|G|\geq 2\delta'|G|+5$ and $1-\delta'\geq 4\mu$, we can apply Lemma~\ref{LemmaAddOneVertexToStar} to get a rainbow $(\leq 3)$-spider $D''$ in $G'$ rooted at $r$. 
Since $G'$ and $D'$ share no colours, Observation~\ref{ObservationTwoDoubleStars} shows that $D'\cup D''$ is a rainbow $(\leq 3t)$-spider rooted at $r$ as required.
\end{proof}

By interating the above lemma it is possible to find many edge-disjoint spanning spiders. 
\begin{lemma}\label{LemmaDisjointSpanningDoubleStars}
Let $\epsilon, \phi, \alpha, \gamma, \tau>0$ and $n$ be sufficiently large.
Let $K_n$ be properly coloured, and $D_1, \dots, D_{\alpha n}$  edge-disjoint rainbow $(\leq \gamma n)$-spiders in $K_n$ with $D_i$ a rainbow $t_i$-spider rooted at $r_i$ for each $i$ satisfying  $|D_i|\geq (1-\tau)n$.
Suppose that $r_i\in D_j$ for all $i,j$, and one of the following holds.
\begin{enumerate}[(i)]
\item For each $i$, there are    at least $ n-|D_i|$ colours outside $D_i$, each with   at least $ \epsilon n$ edges and $\epsilon\geq  9\alpha+8\gamma+ 25\tau$.
\item There are  at most $(1-\phi)n$ colours in $K_n$, each with   at least $ \epsilon n$ edges and  $0.03\geq \epsilon/2-0.001\geq \phi\geq 80\alpha+50\gamma+ 340\tau$.
\end{enumerate}
Then $K_n$ has $\alpha n$ edge-disjoint spanning rainbow $(\leq (\gamma +3\tau)n)$-spiders $D'_1, \dots, D'_{\alpha n}$ with $D'_i$ rooted at $r_i$.
\end{lemma}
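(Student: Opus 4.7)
I would induct on $i$, at step $i$ modifying $D_i$ into a spanning spider $D'_i$. Write $D_i=S_i\cup\hat D_i$ with $S_i$ the rainbow star of ordinary leaves at $r_i$, and set $\mathcal D=\{D'_1,\dots,D'_{i-1},D_i,\dots,D_{\alpha n}\}$ and $G':=G(D_i,\mathcal D)$. Since the family is produced by step~(1), each root $r_j$ with $j\neq i$ is a leg of $\hat D_i$ (rather than an ordinary leaf), so $V(G')=V(K_n)\setminus(V(\hat D_i)\setminus\{r_i\})$ has $n-2t_i$ vertices and $S_i\subseteq G'$. The target is a rainbow spider $\hat S_i\subseteq G'$ rooted at $r_i$ and spanning $V(G')$: Observation~\ref{ObservationExtendStar} then yields a rainbow spider $D'_i:=\hat S_i\cup\hat D_i$ in $K_n$ edge-disjoint from $\mathcal D\setminus\{D_i\}$ with root-covering preserved; Observation~\ref{ObservationTwoDoubleStars} shows $D'_i$ is a $(\le t_i+3(n-|D_i|))$-spider, hence a $(\le(\gamma+3\tau)n)$-spider; and $V(\hat S_i)\cup V(\hat D_i)=V(G')\cup V(\hat D_i)=V(K_n)$, so $D'_i$ is spanning.

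\textbf{Building $\hat S_i$.} To construct $\hat S_i$ I would invoke Lemma~\ref{LemmaAddManyVerticesToStar} on $G'$ with the star $S_i$ and $t:=n-|D_i|\le\tau n$. The minimum-degree estimate $\delta(G')\ge n-3\alpha n-4\gamma n-1$ from Observation~\ref{ObservationHighMinimumDegree} together with $|G'|\le n$ and $t\le\tau n$ verifies the hypothesis $\delta(G')\ge(1-\delta^\star)|G'|+2t$ of that lemma for a suitable $\delta^\star=3\alpha+4\gamma+O(\tau)$. It remains to verify one of the colour conditions of Lemma~\ref{LemmaAddManyVerticesToStar}.

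\textbf{Translating the colour hypotheses.} Under~(i), each of the $\ge n-|D_i|=t$ colours outside $D_i$ has at least $\epsilon n$ edges in $K_n$; passing from $K_n$ to $G'$ removes at most $(\alpha+2\gamma)n$ vertices and up to $\alpha n$ further edges lying in other spiders of $\mathcal D$, costing at most $2(\alpha+\gamma)n$ edges per colour. Each such colour therefore retains at least $(\epsilon-2\alpha-2\gamma)n\ge\epsilon^\star|G'|+t$ edges in $G'$ with $\epsilon^\star\ge\delta^\star+19\tau$ forced by $\epsilon\ge 9\alpha+8\gamma+25\tau$, so condition~(i) of Lemma~\ref{LemmaAddManyVerticesToStar} holds and delivers $\hat S_i$. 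Under~(ii), the count of ``many-edge'' colours transfers by noting that $(1-\phi)n\le(1-\phi^\star)(|G'|-t)$ for $\phi^\star\approx\phi-\alpha-2\gamma-\tau$, and choosing $\epsilon^\star$ slightly larger than $\phi^\star$ yields $0.1\ge\epsilon^\star\ge\phi^\star\ge 13\delta^\star+200\tau$ from the hypotheses $\phi\ge 80\alpha+50\gamma+340\tau$ and $0.03\ge\epsilon/2-0.001\ge\phi$.

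\textbf{Main obstacle.} The main obstacle is the constant tracking in case~(ii), where the thresholds $\epsilon n$ in $K_n$ and $\epsilon^\star(|G'|-t)$ in $G'$ must be matched after subtracting the $\approx 2(\alpha+\gamma)n$ edges per colour lost in passing to $G'$, while still leaving enough slack to satisfy the strict inequality $\phi^\star\ge 13\delta^\star+200\tau$ required by Lemma~\ref{LemmaAddManyVerticesToStar}; this is precisely what the large numerical constants in hypotheses~(i) and~(ii) of the lemma are designed to accommodate.
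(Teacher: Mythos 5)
Your plan is the same as the paper's proof: go through the spiders one at a time, at step $i$ pass to the ``usable'' subgraph $G$ for $D_i$, and invoke Lemma~\ref{LemmaAddManyVerticesToStar} on the star of ordinary leaves with $t = n - |D_i| \le \tau n$. The one genuine gap is the sentence ``Since the family is produced by step~(1), each root $r_j$ with $j\neq i$ is a leg of $\hat D_i$ (rather than an ordinary leaf), so \dots $S_i \subseteq G'$.'' The lemma's hypotheses only say the family is root-covering ($r_j \in V(D_i)$); they do not say how $r_j$ sits in $D_i$. If some $r_j$ were an ordinary leaf of $D_i$, then $r_j \in V(S_i)$ but $r_j \notin V(G(D_i,\mathcal D))$ (all other roots are deleted), so $S_i \not\subseteq G'$ and you cannot feed $S_i$ to Lemma~\ref{LemmaAddManyVerticesToStar}. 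Appealing to how the spiders were produced in step~(1) is not available here: you are proving this lemma, not the theorem, and it must work for any input satisfying the stated hypotheses. The paper closes this by redefining $S_i$ to be the star of ordinary leaves of $D_i$ \emph{disjoint from} $\{r_1,\dots,r_{i-1},r_{i+1},\dots,r_{\alpha n}\}$, absorbing any such $r_j$ (and its edge to $r_i$) into $\hat D_i$. The cost is that $|\hat D_i|$ can then be as large as $(2\gamma+\alpha)n$ rather than $2\gamma n + 1$; this extra $\alpha n$ feeds into the minimum-degree estimate and is why the paper takes $\delta = 5\alpha + 4\gamma + 2\tau$ rather than the $3\alpha+4\gamma+O(\tau)$ you quote. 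Aside from that, your translation of the colour conditions is the right shape, though sketched: the paper's case~(ii) calculation in particular is a per-colour threshold comparison ($(\epsilon'-\tau)|G| \ge \epsilon n$ with a rescaled $\epsilon'$), not a per-colour edge-loss count, but either bookkeeping can be made to work with the given slack.
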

\begin{proof}
For each $i$, let $D_i=S_i\cup \hat D_i$ where $S_i$ is the star consisting of the ordinary leaves of $D_i$ disjoint from $\{r_1, \dots, r_{i-1}, r_{i+1}, \dots, r_{\alpha n}\}$. Notice that we have $|\hat D_i|\leq (2\gamma+\alpha)n$.
For $i=1, \dots, \alpha n$  we will apply  Lemma~\ref{LemmaAddManyVerticesToStar} to  $S_i$ with $\delta=5\alpha+4\gamma+2\tau$, $t=n-|D_i|$, and appropriate $\epsilon'$, $\phi'$, and $G$ in order to get a $(\leq 3\tau n)$-spider $D'_i$ rooted at $r_i$.

At the $i$th application, let $G$ be the subgraph of $K_n$ on $(V(K_n)\setminus V(\hat D_i))\cup \{r_i\}$ consisting of all the edges which are not in $D'_1, \dots, D'_{i-1}, D_{i+1}, \dots, D_{\alpha n}$, and which don't share any colours with $\hat D_i$.
Notice that since the induced subgraphs of $D_j$ and $D'_j$ on $V(G)$ have maximum degree $1$, we have $\delta(G)\geq n-\alpha n-|\hat D_i|-e(\hat D_i)\geq (1-3\alpha -4\gamma )n\geq  (1-\delta+ 2\tau)n\geq (1-\delta)|G|+2\tau |G|$ (using $|\hat D_i|\leq (2\gamma+\alpha)n$ and $\delta=5\alpha+4\gamma+2\tau$). We also have $|G|\geq (1-2\gamma-\alpha)n$.
We claim that either part (i) or (ii) of Lemma~\ref{LemmaAddManyVerticesToStar} holds for $G$.

If we are in case (i), let $\epsilon'= \epsilon-4\alpha-2\gamma-\tau$ and notice that  $\epsilon'\geq \delta+ 19\tau$. holds.
Notice that we have  at least $ n-|D_i|=|G|-|S_i|$ colours in $G$ outside $S_i$ each with  at least $ \epsilon n -2\alpha n -2|\hat D_{i}|\geq (\epsilon-4\alpha-2\gamma)n=(\epsilon'+\tau) n\geq (\epsilon'+\tau) |G|$ edges. This shows that part (i) of Lemma~\ref{LemmaAddManyVerticesToStar} holds.

If we are in case (ii), let $\epsilon'=\epsilon/(1-2\gamma)+\tau$,  $\phi'=1-(1-\phi)/(1-\tau)(1-2\gamma-\alpha)$, and notice that  $0.1\geq \epsilon'\geq \phi'\geq 13\delta+200\tau$ holds. 
Notice that we have  at most $(1-\phi)n\leq (1-\phi')(1-\tau)|G|$ colours in $G$  with  at least $   (\epsilon'-\tau) |G|\geq \epsilon n$ edges. This shows that part (ii) of Lemma~\ref{LemmaAddManyVerticesToStar} holds.

Since all the assumptions of Lemma~\ref{LemmaAddManyVerticesToStar} hold for $G$, we can apply it to get a spanning $(\leq 3\tau n)$-spider $\tilde D_i$ in $G$. By Observation~\ref{ObservationTwoDoubleStars}, $D_i'=\tilde D_i \cup \hat D_i$ is a  $(\leq (\gamma +3\tau)n)$-spider rooted at $r_i$ as required. 
\end{proof}

\subsection*{Step 3: Isomorphic spiders}
In Theorem~\ref{TheoremProperColouring} we want to find many spanning isomorphic spiders. In the proof it is more convenient to first find many spanning non-isomorphic spiders, and later modify them to isomorphic ones. In this section we prove a result about changing $t$-spiders into $s$-spiders for $s>t$. The results in this section are the essence of step (3) in the proof sketch in Section~\ref{SectionProofSketch}.

A total colouring of a directed graph $D$ is an assignment of colours to all the edges and vertices of $D$.
We say that a totally coloured directed graph $D$ is properly coloured if for any vertex $v$ we have $c(xv)\neq c(yv)$, $c(vx)\neq c(vy)$, $c(xv)\neq c(v)$, and $c(vx)\neq c(v)$  for distinct $x,y,v$. Notice that we do not forbid $c(xv)=c(vy)$. A totally coloured graph is vertex-rainbow if all its vertices have different colours. We'll need the following lemma.
\begin{lemma}\label{LemmaRainbowCycle}
Let $D$ be a properly totally coloured, vertex-rainbow directed graph with $e(D)\geq (1-\delta)|D|^2$.
Then $D$ has a rainbow cycle of length $s$ for any $3\leq  s< \frac{1-9\sqrt{\delta}}{12}|D|$.
\end{lemma}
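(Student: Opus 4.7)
Let $n = |D|$. The plan is to construct the rainbow cycle $v_1 \to v_2 \to \cdots \to v_s \to v_1$ greedily, after pre-committing to the closing edge. From $e(D) \geq (1-\delta)n^2$ together with $\sum_v d^+(v) = e(D) = \sum_v d^-(v)$ and $d^+(v), d^-(v) \leq n$, a simple averaging argument shows at most $\sqrt{\delta}\,n$ vertices have $d^+(v) < (1-\sqrt{\delta})n$ and likewise for $d^-$. Call a vertex \emph{good} if both $d^+(v), d^-(v) \geq (1-\sqrt{\delta})n$, so at most $2\sqrt{\delta}\,n$ vertices are bad.

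First I would pick good vertices $v_1$ and $v_s$ with $v_s v_1 \in E(D)$ (trivially possible since $v_1$ has at least $(1-3\sqrt{\delta})n$ good in-neighbours) and set $c_0 := c(v_s v_1)$. The remaining task is to build a rainbow path $v_1 \to v_2 \to \cdots \to v_{s-1} \to v_s$ whose vertex and edge colours are pairwise distinct and all distinct from $c_0$, $c(v_1)$, and $c(v_s)$. For $i = 2, 3, \ldots, s-2$, I would extend greedily: choose $v_i \in N^+(v_{i-1})$ that is good, unused, has a new vertex colour, and is reached by an edge $v_{i-1}v_i$ of new colour. The forbidden-colour set at this step has size at most $2i - 1$; vertex-rainbowness means each forbidden vertex colour excludes at most one candidate, and properness of the edge colouring means each forbidden edge colour excludes at most one out-edge of $v_{i-1}$. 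Hence there are at least $(1-\sqrt{\delta})n - 2\sqrt{\delta}\,n - i - 2(2i-1) = (1-3\sqrt{\delta})n - 5i + O(1)$ valid choices for $v_i$, positive throughout the range.

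The tight step is $i = s-1$: pick $v_{s-1} \in N^+(v_{s-2}) \cap N^-(v_s)$, which by inclusion--exclusion has size at least $(1-2\sqrt{\delta})n$, and require it to satisfy all the colour constraints including $c(v_{s-1} v_s) \neq c_0$. Subtracting the at most $2s-3$ exclusions coming respectively from the new vertex colour, the in-edge colour $c(v_{s-2}v_{s-1})$, and the out-edge colour $c(v_{s-1}v_s)$, together with bad and already-used vertices, leaves at least $(1-4\sqrt{\delta})n - 7s + O(1)$ valid choices. This is positive under the stated hypothesis $s < (1-9\sqrt{\delta})n/12$ with considerable slack, so the greedy completes and the closing edge $v_s \to v_1$ (of colour $c_0$, avoided throughout) produces the desired rainbow cycle of length $s$.

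The main obstacle this plan sidesteps is the constraint $c(v_{s-1}v_s) \neq c(v_s v_1)$: proper total colouring explicitly permits $c(xv)=c(vy)$ (an in-colour equal to an out-colour at the same vertex), so if $v_s$ were chosen only after the path were built, the coincidence set $\{v_s : c(v_{s-1}v_s) = c(v_s v_1)\}$ could in principle be linear in $n$ with no obvious structural bound, and the final step could fail. Committing to $v_s$ (and hence $c_0$) up front converts this awkward in/out coincidence into a single extra forbidden colour per extension step, which the proper colouring already controls via its one-colour-per-edge bound at each vertex.
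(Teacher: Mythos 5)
Your plan follows the paper's closely---the averaging step extracting a dense set of ``good'' vertices, then a colour-avoiding greedy path that closes into a cycle---and pre-committing to $v_s$ and the closing colour $c_0=c(v_sv_1)$ is a genuinely nice variation. But there is a real gap at the final step, and it is the very same in/out coincidence you identify: it has simply migrated from $v_s$ to $v_{s-1}$. When you choose $v_{s-1}\in N^+(v_{s-2})\cap N^-(v_s)$, the cycle acquires three new colours $c(v_{s-1})$, $c(v_{s-2}v_{s-1})$, $c(v_{s-1}v_s)$; you require each to avoid the $2s-3$ previously committed colours and tally $3(2s-3)$ exclusions, but you never require the three new colours to be distinct \emph{from one another}. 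Two of the three pairwise requirements are automatic from the proper-total-colouring rules $c(xv)\neq c(v)$ and $c(vx)\neq c(v)$, but $c(v_{s-2}v_{s-1})\neq c(v_{s-1}v_s)$ is not---the definition explicitly permits $c(xv)=c(vy)$, as you yourself note. With $v_{s-2}$ and $v_s$ both already fixed, the bad set $\{w : c(v_{s-2}w)=c(wv_s)\}$ has no structural bound: each of $w\mapsto c(v_{s-2}w)$ and $w\mapsto c(wv_s)$ is injective on its domain, but nothing prevents the two injections from agreeing on a linear fraction of the intersection, which would wipe out your $(1-4\sqrt{\delta})n-7s$ budget. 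Pre-committing to $v_s$ relocated this obstacle from the closing vertex to the penultimate one; it did not remove it.

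The paper circumvents exactly this by choosing the last \emph{two} vertices simultaneously. It builds the greedy rainbow path only up to $v_{s-2}$ and then searches for an \emph{edge} $xy$ with $x\in N^+_{D'}(v_{s-2})$, $y\in N^-_{D'}(v_1)$ such that $v_1,\dots,v_{s-2},x,y$ is a rainbow cycle. Counting over edges rather than vertices, the coincidence $c(v_{s-2}x)=c(xy)$ excludes at most one $y$ per $x$ (hence at most $|D|$ edges), and likewise $c(xy)=c(yv_1)$ excludes at most $|D|$ edges; against roughly $(1-9\sqrt{\delta})|D|^2$ candidate edges these exclusions, together with the $10s|D|$ from colours already on the path, are negligible. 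To repair your argument, drop the pre-commitment to $v_s$ and instead choose $v_{s-1}$ and $v_s$ together as an edge between $N^+(v_{s-2})$ and $N^-(v_1)$---which is precisely the paper's closing step.
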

\begin{proof}
Let $D'$ be the induced subgraph of $D$ consisting of vertices $v$ with $|N^+(v)|\geq (1-\sqrt{\delta}) |D|$ and $|N^-(v)|\geq (1-\sqrt{\delta}) |D|$. 
Since $e(D)\geq (1-\delta)|D|^2$, there are at most $\sqrt{\delta}|D|$ vertices in $D$ with $|N^-(v)|< (1-\sqrt{\delta}) |D|$ and at most $\sqrt{\delta}|D|$ vertices with $|N^+(v)|< (1-\sqrt{\delta}) |D|$.
These imply that  $\delta^-(D'), \delta^+(D')\geq (1-3\sqrt{\delta}) |D|$. 

Choose a sequence of vertices $v_1, v_2, \dots, v_{s-2}\in V(D')$  with $v_{i+1}$ chosen from $N^+_{D'}(v_i)$ with $c(v_iv_{i+1}), c(v_{i+1})\not\in \{c(v_1), \dots, c(v_{i})\}\cup \{c(v_1v_2), \dots, c(v_{i-1}v_i)\}$. This is possible since $D$ is properly coloured, vertex-rainbow,  $s\leq \frac{1-9\sqrt{\delta}}{11}|D|$, and $|N^+_{D'}(v_i)|\geq (1-3\sqrt{\delta}) |D|$. We have that $P=v_1, v_2, \dots, v_{s-2}$  is a rainbow path.

Notice that out of the edges $xy$ with $x\in N^+_{D'}(v_{s-2})$ and $y\in N^-_{D'}(v_1)$ there are at most $10s|D|$ edges with $c(v_{s-2}x)$, $c(x)$, $c(xy)$, $c(y)$, or $c(yv_1)$ occuring in $P$, and at most $5|D|$ edges $xy$ for which any of $c(v_{s-2}x)=c(xy)$, $c(v_{s-2}x)=c(y)$, $c(x)=c(yv_1)$, $c({x}y)=c(yv_1)$, or $c(v_{s-2}x)=c(yv_1)$ hold. Since there are at least 
$(\delta^+(D')-|D'\setminus N^-_{D'}(v_{1})|)|N^+_{D'}(v_{s-2})|\geq (1-6\sqrt{\delta})|D|(1-3\sqrt{\delta})|D|\geq (1-9\sqrt{\delta})|D|^2>12s|D|$ edges from $N^+_{D'}(v_{s-2})$ to $N^-_{D'}(v_1)$, there must be at least one edge $xy$ for which none of these occur. Now $v_1, v_2, \dots, v_{s-2}, x, y$ is a rainbow cycle of length $s$ as required.
\end{proof}

The following lemma allows us to increase the parameter in a spider.
\begin{lemma}\label{LemmaChangeDoubleStarParameter}
Let $G$ be a sufficiently large properly coloured graph with $|N(v)|\geq (1-\delta)|G|$ holding for at least $(1-\delta)|G|$ vertices in $G$.
For $t\leq \delta|G|$, let $D_0$ be a spanning rainbow $t$-spider in $G$ which is rooted at $r$. 
Then for any $s$ with $3\leq s\leq (0.001-8\delta)|G|$, $G$ has a spanning rainbow $(t+s)$-spider rooted at $r$.
\end{lemma}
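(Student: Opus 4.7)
The plan is to build the desired $(t+s)$-spider from $D_0$ by a single rewiring step: pick $2s$ distinct ordinary leaves $y_{a_1},y_{b_1},\ldots,y_{a_s},y_{b_s}$ of $D_0$ together with edges $e_i=y_{a_i}y_{b_i}\in E(G)$ whose colour multi-set matches $\{c(ry_{b_i})\}_{i=1}^s$, then declare each $y_{a_i}$ a new $j$-vertex and each $y_{b_i}$ a new $x$-vertex. Writing $Y$ for the set of ordinary leaves of $D_0$, one has $|Y|\geq(1-3\delta)|G|$ since $t\leq\delta|G|$, and the graph $D_0-\{ry_{b_i}\}+\{e_i\}$ is automatically a spanning rainbow $(t+s)$-spider rooted at $r$: each $y_{a_i}$ retains its $r$-edge and acquires its $y_{b_i}$-edge, while the added and removed colour sets coincide, so the rainbow property is preserved.

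To arrange the required configuration I will apply Lemma~\ref{LemmaRainbowCycle} to a carefully chosen auxiliary digraph $D$ on vertex set $Y$. Label each vertex $y\in Y$ with its own colour $y$, and, for each ordered pair $y_b\neq y_c$ in $Y$, include the directed edge $y_b\to y_c$ with edge label $y_a$ whenever the unique $G$-neighbour $y_a$ of $y_b$ satisfying $c(y_ay_b)=c(ry_c)$ exists and lies in $Y$. A short check, using that $G$ is properly coloured and that the colour $c(ry_c)$ cannot appear at $y_c$ except on the edge $ry_c$, verifies that $D$ is properly totally coloured and vertex-rainbow. A rainbow cycle $y_{b_1}\to\cdots\to y_{b_s}\to y_{b_1}$ in $D$ with edge labels $y_{a_1},\ldots,y_{a_s}$ then automatically has $\{y_{a_1},\ldots,y_{a_s}\}\cup\{y_{b_1},\ldots,y_{b_s}\}$ consisting of $2s$ distinct elements (because the rainbow-cycle conditions force the edge labels to be distinct among themselves and disjoint from the vertex labels), and the edges $e_i=y_{a_i}y_{b_i}$ of $G$ satisfy $c(e_i)=c(ry_{b_{i+1}})$ by construction, so $\{c(e_i)\}=\{c(ry_{b_i})\}$ as sets, giving exactly the cyclic rotation needed for the rewiring.

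The main obstacle is lower-bounding $e(D)$ sufficiently to invoke Lemma~\ref{LemmaRainbowCycle}. A direct count yields $d^+_D(y_b)\geq(1-O(\delta))|G|-E^*(y_b)$ for each high-degree $y_b\in Y$, where $E^*(y_b)$ is the number of edges at $y_b$ whose colour lies outside $D_0$; the $O(\delta)|G|$ loss accounts for the $\leq 2t+1$ non-$Y$ vertices and the $2t$ colours of the original legs of $D_0$. I finish by a dichotomy on $E^*:=\sum_{y_b\in Y}E^*(y_b)$. If $E^*$ is small, of order $\delta|G|^2$, then $e(D)\geq(1-O(\delta))|Y|^2$ and Lemma~\ref{LemmaRainbowCycle} delivers a rainbow cycle of length $s$, completing the proof by the rewiring above. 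If instead $E^*$ is large, then $G[Y]$ contains many edges whose colour lies outside $D_0$; Lemma~\ref{LemmaLargeMatchingSmallMaximumDegree} then produces a rainbow matching of size $s$ among these edges, and because each such colour is absent from $D_0$, each matching edge can be used to subdivide one leaf edge of $D_0$ with no colour conflict at all. The numerical bound $s\leq(0.001-8\delta)|G|$ is calibrated so that the $\sqrt{\delta}$-type loss in Lemma~\ref{LemmaRainbowCycle} still permits cycles of length $s$ in the first case, while simultaneously allowing Lemma~\ref{LemmaLargeMatchingSmallMaximumDegree} to reach the required matching size in the second.
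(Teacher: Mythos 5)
Your approach is essentially the paper's: a dichotomy between a rainbow matching argument and a rainbow cycle argument via an auxiliary digraph fed into Lemma~\ref{LemmaRainbowCycle}. The digraph construction is the same as the paper's, just with the roles of arc source and target swapped (the paper lets the source supply the $r$-colour and the target supply an endpoint of the $G$-edge; you do the reverse, which gives an isomorphic structure). The rewiring step, the check that the digraph is properly totally coloured and vertex-rainbow, and the translation of a rainbow cycle into a colour-balanced edge exchange all mirror the paper's argument. The one genuine difference is how the dichotomy is posed: the paper first extracts a maximum rainbow matching $M$ of colours off the star $S$ and splits on $e(M)$, deleting $V(M)$ and its colours so that, by maximality of $M$, \emph{every} remaining colour lies on $S$; you instead split on the raw count $E^{*}$ of edges at $Y$-vertices whose colour lies outside $D_0$.

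There is a quantitative flaw in how you set the threshold. You propose splitting at $E^{*}$ ``of order $\delta|G|^{2}$,'' but when $\delta$ is tiny this is far too small: $E^{*}\approx\delta|G|^{2}$ only yields, via Lemma~\ref{LemmaLargeMatchingSmallMaximumDegree}, a rainbow matching of size $\Theta(\delta|G|)$, whereas you need size $s$, which can be nearly $0.001|G|$ when $\delta$ is small. The threshold must be an absolute constant times $|G|^{2}$. For instance, if $E^{*}\geq 0.006|G|^{2}$ then (after discounting the $O(\delta)|G|^{2}$ edges leaving $Y$) $G[Y]$ contains at least $0.0025|G|^{2}$ edges with colours off $D_0$, and Lemma~\ref{LemmaLargeMatchingSmallMaximumDegree} with $\Delta\leq|G|$ and $b\leq|G|/2$ gives a rainbow matching of size at least $0.001|G|\geq s$. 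Conversely if $E^{*}<0.006|G|^{2}$, your degree count gives $e(D)\geq(1-0.006-O(\delta))|D|^{2}$, and Lemma~\ref{LemmaRainbowCycle} applied with $\delta'\approx 0.006$ still produces rainbow cycles of length up to roughly $0.025|D|$, comfortably above $(0.001-8\delta)|G|$. With the threshold corrected to a constant, your proof goes through and is essentially the same as the paper's; the paper's maximal-matching preprocessing simply makes the accounting cleaner by guaranteeing that all colours on the residual graph lie on $S$.
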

\begin{proof}
Let $r$ be the root of $D_0$. Let $D_0=S\cup \hat D$ where $S$ is the star consisting of the ordinary leaves of $D_0$ and $\hat D$ is a $t$-spider. Let $B$ be the set  of  at most $\delta|G|$ vertices of degree less than $(1-\delta)|G|$ in $G$.
Let $H$ be the subgraph of $G$ on $V(S)\setminus B$ consisting of the colours not in $\hat D$.  We have 
$\delta (H)\geq \delta(G)-e(\hat D)- |V(G)\setminus V(H)|=
(1-\delta)|G|-e(\hat D)-v(\hat D)+1-|B|\geq (1-6\delta )|G|$.
Using Observation~\ref{ObservationTwoDoubleStars}, to prove the lemma it is sufficient to find a spanning rainbow $s$-spider in $H$ which is rooted at $r$.
Let $M$ be a maximum rainbow matching in $H$  consisting of colours not on $S$. 

Suppose that $e(M)\geq  (0.001-8\delta)|G|$. Let $M'=\{x_1y_1, \dots, x_{s}y_s\}$ be a submatching of $M$. Since $M$ doesn't share colours with $S$, Observation~\ref{ObservationStarPlusMatching} shows that $D'=S\cup M'\setminus\{rx_1, \dots, rx_{s}\}$ is a spanning rainbow $s$-spider in $H$ as required.

Suppose that $e(M)\leq  (0.001-8\delta)|G|$. Let $J$ be the subgraph of $H$ on $H\setminus (V(M)\cup \{r\})$ consisting of colours  not on $M$. 
We have $\delta(J)\geq \delta(H)-3e(M)-1\geq  (0.997+18\delta)|G|-1\geq 0.995|J|$ and $e(J)\geq \delta(J)|J|/2\geq 0.997|J|^2/2$. By maximality of $M$, all colours on $J$ occur in $S$.
We construct an auxiliary totally coloured digraph $D$ with vertex set $V(J)$ whose set of colours is also $V(J)$. 
For $x,y,z\in V(J)$  we let $xy$ be a colour $z$ edge in $D$ whenever there is a colour $c(rx)$ edge between $z$ and $y$ in $G$. If there is no colour $c(rx)$ edge touching $y$ in $G$, then there is no edge $xy$ in $D$.
We colour every vertex $v$ by itself.
Notice that every edge in $J$ contributes exactly twice to $D$, giving $e(D)=2e(J)\geq 0.995|D|^2$.
Notice that $D$ is properly coloured with rainbow vertex set. Indeed $vx$ and $vy$ cannot have the same colour because $G$ is properly coloured, $xv$ and $yv$ cannot have the same colour since $G$ is simple, $vx$ is not coloured by $v$ since $G$ is properly coloured, and $xv$ is not coloured by $v$ since $G$ is loopless.

By Lemma~\ref{LemmaRainbowCycle} applied with $\delta=0.001$, $D$ has a rainbow cycle $C=x_1x_2, \dots, x_s$ of length $s\leq \frac{1-9\sqrt{0.001}}{12}|D|$. Let $m_i$ be the edge of $G$ corresponding to $x_ix_{i+1 \pmod s}$ i.e. let if $c(x_ix_{i+1 \pmod s})=z$, then we let $m_i=zx_{i+1 \pmod s} \in E(G)$. 
Let $M'=\{m_1, \dots, m_s\}$. Notice that $M'$ is a matching since $C$ is rainbow and has in-degree $1$. Notice that $M'$ is rainbow since $C$ has out-degree $1$. 
By definition of $D$ we know that $x_i\in m_{i-1\pmod s}$ and $m_i$ has the same colour as $rx_i$.
By Observation~\ref{ObservationStarPlusMatching}, $D'=S\cup M'\setminus\{rx: x \in V(C)\}$ is a $(t+s)$-spider satisfying the lemma.
\end{proof}

\subsection*{Proof of Theorem~\ref{TheoremProperColouring}}
We now prove the main result of this section.
\begin{proof}[Proof of Theorem~\ref{TheoremProperColouring}]
In this proof let $\alpha=0.000001$, $\phi=0.0005$, and $\epsilon=0.06$.
Let $C_F$ be the set of  colours  which each have  at least $ \epsilon n$ edges. Notice that  one of the following holds.
\begin{enumerate}[(a)]
\item $\phi n \geq n-|C_F|$,
\item $|C_F|\leq (1-\phi) n$.
\end{enumerate}
Our proof will be slightly different depending on which of the above cases occurs. 

First we define a set of vertices $A=\{r_1, \dots, r_{\alpha n}\}$ of size $\alpha n$. If we are in case (b), let $A$ be  an arbitrary set of this size.
If we are in case (a), first let $H$ be the subgraph of $K_{n}$ of colours not it $C_F$. Notice that $\delta (H)\geq  n-|C_F|-1$, every colour in $H$ occurs at most $\epsilon n$ times,  and $\alpha n\leq (n-72\phi n- 6\epsilon n)/29\leq (n-72(n-|C_F|-1)- 6\epsilon n)/29$. By Lemma~\ref{LemmaManyLargeMatchings} applied with $G=H$, $\delta=n-|C_F|-1$, $b=\epsilon n$, and $t=\alpha n$ we can choose rainbow matchings $M_1, \dots, M_{\alpha n}$ of size $(n-|C_F|-1)$ and a set $A=\{r_1, \dots, r_{\alpha n}\}$ of size $\alpha n$ disjoint from $M_1, \dots, M_{\alpha n}$. In addition  for every $xy\in M_i$ either $c(r_ix)\not\in M_i$ or $c(r_i y)\not\in M_i$.

Next, we let $B=V(K_n)\setminus A$ and apply Lemma~\ref{LemmaCoverBipartiteGraph} to the complete bipartite graph $K_n[A,B]$.
If we are in case (b), we do this with $F_1, \dots, F_{\alpha n}=\emptyset$ and $\delta=0$. Using $|B|=(1-\alpha)n\geq 8\alpha n = 8|A|$, Lemma~\ref{LemmaCoverBipartiteGraph}  gives us   $\alpha n$ edge-disjoint rainbow $(\alpha n-1)$-spiders $D_1, \dots, D_{\alpha n}$ with $D_i$ rooted at $r_i\in A$, $D_i$ covering $A$, and $|D_i|\geq |B|-|A|+1\geq (1-2\alpha)n$.
If we are in case (a), we apply Lemma~\ref{LemmaCoverBipartiteGraph}  with  $F_i=\bigcup_{xy\in M_i}\{c(xy), c(r_ix), c(r_iy)\}$ and $\delta=4\phi$, which satisfy  $|F_i|\leq 3\phi n\leq 4\phi|B|$ and $(1-4\phi)|B|=(1-4\phi)(1-\alpha)n\geq3\alpha n = 3|A|$.
  Lemma~\ref{LemmaCoverBipartiteGraph}  gives us $\alpha n$ edge-disjoint rainbow $(\alpha n-1)$-spiders $D_1, \dots, D_{\alpha n}$ with $D_i$ rooted at $r_i\in A$, $D_i$ covering $A$, $D_i$ having no colours from $F_i$, and $|D_i|\geq (1-4\phi)|B|-|A|+1\geq(1-4\phi-2\alpha)n$. 
  Notice that since $c(r_ix), c(r_iy)\in F_i$ we have that $D_i$ is vertex-disjoint from $M_i$ (using the fact that all vertices in $D_i\cap B$ are neighbors of $r_i$ since $D_i$ is a spider contained in $K_n[A,B]$).

Next we apply Lemma~\ref{LemmaDisjointSpanningDoubleStars} to $K_n$ in order to get edge-disjoint spanning rainbow $(\leq 0.0002 n)$-spiders $D'_1, \dots, D'_{\alpha n}$.
If we are in case (b), notice that part (ii) of Lemma~\ref{LemmaDisjointSpanningDoubleStars} holds with $\alpha=\alpha$,  $\gamma=\alpha$, $\tau=2\alpha$, $\phi=\phi$, and $\epsilon=\epsilon$. Therefore we can apply Lemma~\ref{LemmaDisjointSpanningDoubleStars} to get the required spiders.
If we are in case (a), recall that by construction of $M_i$ and $A$ in Lemma~\ref{LemmaManyLargeMatchings}, for each $m\in M_i$ there is a vertex $x_m\in m$ with $c(r_ix_m)\not\in M_i$.
Let $D^1_i=D_i\cup M_i\cup\{r_ix_m: m\in M_i\}$. Notice that $D_i\cup\{r_ix_m: m\in M_i\}$  and  $D_i\cup M_i$ are rainbow by choice of $F_i$ in our application of Lemma~\ref{LemmaCoverBipartiteGraph}, and $M_i\cup\{r_ix_m: m\in M_i\}$ is rainbow by choice of the $x_m$ vertices. This combined with Obervation~\ref{ObservationStarPlusMatching} and $\alpha n +1 +e(M_i)\leq \alpha n+\phi n$ show  that $D^1_i$ is a rainbow $(\leq \alpha n +\phi n)$-spider on $|D_i|+|M_i|\geq (1-4\phi-2\alpha)n$ vertices.
There are $e(M_i)$  colours on $D_i^1$ outside of $C_F$ (the colours on $M_i$). 
Therefore there are at most $e(D_i^1)-e(M_i)$ colours of $C_F$ on $D_i^1$, and hence at least $|C_F|-(e(D_i^1)-e(M_i))= 
|C_F|-e(D_i^1)+(n-|C_F|-1)
=n-|D_i^1|$ colours of $C_F$ outside $D_i^1$. This shows that condition (i) of Lemma~\ref{LemmaDisjointSpanningDoubleStars} holds with $\alpha=\alpha$, $\gamma=\alpha+\phi$, $\tau=4\phi+2\alpha$, and $\epsilon=\epsilon$.
Therefore, we can  apply Lemma~\ref{LemmaDisjointSpanningDoubleStars} to get the required spiders.

Now we have   edge-disjoint spanning rainbow $(\leq 0.0006 n)$-spiders $D'_1, \dots, D'_{\alpha n}$ which are rooted at $r_1, \dots, r_{\alpha n}$ respectively. We can apply Lemma~\ref{LemmaChangeDoubleStarParameter} to these spiders one at a time to turn them into $t$-spiders.
At the $i$th application, let $G$ be $K_n$ minus all the spiders except $D_i$ and set $\delta=0.0006$. This way $N(v)\geq (1-\delta)|G|$ holds for the $n-|A|\geq (1-\delta)n$ vertices outside $A$, and so Lemma~\ref{LemmaChangeDoubleStarParameter} gives us a $t$-spider disjoint from all previously constructed spiders.
\end{proof}

\section{Concluding remarks}
Here we mention some interesting directions for further research. 

\subsubsection*{Improving the bounds}
The most natural open problem is to further improve the bounds on Conjectures~\ref{Conjecture_Brualdi_Hollongsworth} -- \ref{Conjecture_Constantine}. 
In this paper we limited ourselves to proving a good quantiative bound on the Brualdi-Hollingsworth Conjecture (Theorem~\ref{Theorem1Factorization}) and proving the strongest qualitative result (Theorem~\ref{TheoremProperColouring}).

Theorem~\ref{TheoremProperColouring} represents a simultaneous improvement to the best known bounds on Conjectures~\ref{Conjecture_Brualdi_Hollongsworth} -- \ref{Conjecture_Constantine}. If one wants to further improve the bounds on any one of these conjectures, then it is routine to modify our methods to do so.
Particularly, we mention that it is possible to obtain quite a good bound on Constantine's Conjecture by combining the proofs of Theorems~\ref{TheoremProperColouring} and~\ref{Theorem1Factorization}. This is because the source of the small constant ``$0.000001$'' in Theorem~\ref{TheoremProperColouring}  is that the colouring on $K_n$ was a general proper colouring (rather than a $1$-factorization). If instead we are in the setting of a $1$-factorization (as in Constantine's Conjecture) then it is easy to modify the proof to find around $0.01$ edge-disjoint spanning rainbow isomorphic trees. 
The big open problem seems to be to prove some sort of asymptotic version of Conjectures Conjectures~\ref{Conjecture_Brualdi_Hollongsworth} -- \ref{Conjecture_Constantine}.  For example does every properly $(n-1)$-edge-coloured $K_n$ have $(1-o(1))n$ edge-disjoint spanning rainbow trees?

\subsubsection*{Proper colourings versus bounded colourings}
A colouring of a graph is $b$-bounded if there are  at most $ b$ edges of each colour. Notice that every properly coloured $K_n$ is $n/2$-bounded. It would be interesting to know whether any of the results in this paper generalize to colourings which are bounded rather than proper. In this direction, the best result is by Carraher, Hartke, and  Horn~\cite{Carraher_Hartke_Horn} who showed that every $n/2$-bounded colouring of $K_n$ had $\lfloor n/1000\log n\rfloor$ edge-disjoint rainbow spanning trees.

Curiously, Theorem~\ref{TheoremProperColouring} is not true for colourings that are $n/2$-bounded. In fact, Sudakov and Volec~\cite{Sudakov_Volec} constructed $9$-bounded colourings of $K_n$ which contain no spanning rainbow tree of radius $2$. In particular this implies that there are $9$-bounded colourings of $K_n$ without any spanning rainbow spiders. This shows that if some analogue of Constantine's Conjecture holds for bounded colourings, then one would need to consider graphs different form spiders.

\subsubsection*{Finding copies of a rainbow tree}
Notice that Theorem~\ref{TheoremProperColouring} is qualitatively stronger than Conjecture~\ref{Conjecture_Constantine} --- Theorem~\ref{TheoremProperColouring} allows us to specify what spanning rainbow tree we find (whereas Conjecture~\ref{Conjecture_Constantine} only says that we should find isomorphic trees without specifying the isomorphism class of the trees). This opens up the intriguing area of what collections of rainbow trees can be found in every properly coloured $K_n$. In this direction one can modify the result in this paper to allow us to find several different spiders in a properly coloured $K_n$.

\begin{theorem}
Let $T_1, \dots, T_{0.000001n}$ be spiders on $n$ vertices with $T_i$ a $t_i$-spider for $0.003n\leq t_i\leq 0.2n$.
Then every properly coloured $K_n$  contains edge-disjoint rainbow spanning copies of $T_1, \dots, T_{0.000001 n}$.
\end{theorem}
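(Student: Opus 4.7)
The plan is to re-run the three-step strategy of Theorem~\ref{TheoremProperColouring} and modify only the final step so that each spider is driven to its prescribed type rather than to a common type. Throughout let $m = 0.000001 n$.

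First I would carry out steps (1) and (2) from the proof of Theorem~\ref{TheoremProperColouring} verbatim. Splitting into the two cases ``$\phi n \geq n-|C_F|$'' and ``$|C_F|\leq (1-\phi)n$'' based on the number of large colour classes, use Lemma~\ref{LemmaManyLargeMatchings} and Lemma~\ref{LemmaCoverBipartiteGraph} to produce an edge-disjoint, root-covering family of large rainbow spiders $D_1,\dots,D_m$ rooted in some set $A=\{r_1,\dots,r_m\}$ of size $m$, and then apply Lemma~\ref{LemmaDisjointSpanningDoubleStars} (checking either hypothesis (i) or (ii), whichever is appropriate for the case) to obtain edge-disjoint spanning rainbow $(\leq 0.0006n)$-spiders $D'_1,\dots,D'_m$ with $D'_i$ still rooted at $r_i$. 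Nothing in these two steps depends on the isomorphism type of the output, so they go through without any modification.

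For the new step (3), I would process the indices $i=1,\dots,m$ in turn and apply Lemma~\ref{LemmaChangeDoubleStarParameter} to $D'_i$, but with the target parameter $t_i$ prescribed by $T_i$ rather than a fixed common value. At the $i$th application let $G$ be $K_n$ with the edges of the already-modified spiders $D''_1,\dots,D''_{i-1}$ and of the still-unmodified $D'_{i+1},\dots,D'_m$ removed. Since at most $m$ spiders are ever deleted and each has maximum degree bounded by $|A|$ plus a small additive term, all but at most $|A|$ vertices of $G$ still satisfy $|N(v)|\geq (1-\delta)|G|$ for a small $\delta$, so the hypothesis of Lemma~\ref{LemmaChangeDoubleStarParameter} holds. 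The current parameter of $D'_i$ is at most $0.0006n$ while the target $t_i$ lies in $[0.003n,\,0.2n]$, so the required increment $s_i = t_i - (\text{current parameter of }D'_i)$ is at least $0.0024n$ and at most roughly $0.2n$, which sits comfortably inside the range the lemma handles (this is exactly the range used in the last paragraph of the proof of Theorem~\ref{TheoremProperColouring}, with a little more slack on the lower end thanks to the hypothesis $t_i\geq 0.003n$). Applying the lemma therefore produces a spanning rainbow $t_i$-spider $D''_i$ rooted at $r_i$ and edge-disjoint from everything previously reserved, which is a rainbow copy of $T_i$ as required.

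The main ``obstacle'' is purely bookkeeping: verifying that the choice of absolute constants inherited from Theorem~\ref{TheoremProperColouring} ($\alpha=10^{-6}$, $\phi=5\cdot 10^{-4}$, $\epsilon=0.06$) continues to satisfy the needed inequalities when the target of step (3) is allowed to vary with $i$ in the window $[0.003n,0.2n]$. Since Lemma~\ref{LemmaChangeDoubleStarParameter} is applied independently for each $i$ and its hypotheses depend only on the residual graph $G$ (not on the common target), this is immediate. The full proof is therefore the proof of Theorem~\ref{TheoremProperColouring} with its final sentence replaced by the indexed statement ``apply Lemma~\ref{LemmaChangeDoubleStarParameter} to turn $D'_i$ into a $t_i$-spider.''
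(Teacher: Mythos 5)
Your proposal is correct and is essentially identical to the paper's own argument, which states that the proof is the same as that of Theorem~\ref{TheoremProperColouring} except that in the last step Lemma~\ref{LemmaChangeDoubleStarParameter} is applied to turn $D'_i$ into a $t_i$-spider rather than a common $t$-spider. Your bookkeeping remarks (in particular that each application of Lemma~\ref{LemmaChangeDoubleStarParameter} depends only on the residual graph, and that $t_i\geq 0.003n$ provides even more slack than the lower bound $0.0007n$ of Theorem~\ref{TheoremProperColouring}) accurately reflect why the substitution goes through.
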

The proof of the above theorem is identical to the proof of Theorem~\ref{TheoremProperColouring}, except that in the last line of the proof one applies Lemma~\ref{LemmaChangeDoubleStarParameter} to create $t_i$-spiders rather than $t$-spiders.

It would be interesting to know for what other collections of trees $T_1, \dots, T_{0.000001 n}$ the above theorem is true. This problem may be quite hard, since even for uncoloured complete graphs there are many open problems about finding edge-disjoint trees eg. the Gy\'arf\'as-Sumner Conjecture.

A related open problem is ``which rainbow trees can be found in every properly coloured $K_n$?''
At first glance, one might hope that for any $n$-vertex tree $T_n$, every properly coloured $K_n$ contains a rainbow copy of $T_n$. However this is false already for paths. Maamoun and Meyniel~\cite{Maamoun_Meyniel} found proper $(n-1)$-edge-colourings of $K_{n}$ without a spanning rainbow path. Some extensions of this result, showing that there are edge colourings not
containing some other spanning trees were found in \cite{BPS}.
On the other hand, together with Alon~\cite{Alon_Pokrovskiy_Sudakov} the authors showed how to find a rainbow path of length $n-o(n)$ in every properly edge-coloured $K_n$. Based on this, one can expect that perhaps for every tree $T$, a rainbow copy of $T$ is contained in every properly edge-coloured complete graph with a few more vertices than $T$. Indeed such a result was very  recently proved in \cite{MPS}.
 
\subsubsection*{Note added in proof} 
After this paper was written we learned that
very recently Balogh, Liu and Montgomery \cite{BLM} proved the existence of $\epsilon n$ edge-disjoint spanning rainbow trees in every properly edge-colored $K_n$.

\subsubsection*{Acknowledgment}
The authors are grateful to D\"om\"ot\"or P\'alv\"olgyy to for comments on this paper.

\end{document}